\documentclass[12pt]{article}
\usepackage{amsmath,amssymb,amsthm}
\usepackage[mathscr]{euscript}
\usepackage[letterpaper]{geometry}
\geometry{verbose,tmargin=2cm,bmargin=2cm,lmargin=2cm}
\usepackage{comment}
\usepackage{graphicx}
\usepackage{latexsym}
\usepackage{color}

\oddsidemargin .2cm \evensidemargin .2cm \textwidth=16.5cm
\textheight=22truecm \unitlength=1cm

\topmargin=-0.5truecm
\parskip 2mm
\unitlength=1cm

\def\BBox{\kern  -0.16cm\hbox{\vrule width 0.16cm height 0.16cm}}

\renewcommand{\bar}{\overline}

\renewcommand{\leq}{\leqslant} 
\renewcommand{\geq}{\geqslant}

\newcommand{\CP}{\mathcal{P}}
\newcommand{\K}{\mathcal{K}} 
\newcommand{\CL}{\mathcal{L}}

\newtheorem{lemma}{Lemma}[section]
\newtheorem{theorem}{Theorem}[section]
\newtheorem{problem}{Problem}[section]

\newtheorem{corollary}{Corollary}[section]

\title{Regular Incidence Complexes, Polytopes, and C-Groups}
\author{
Egon Schulte\thanks{Email: schulte@neu.edu}\\[.03in]
Department of Mathematics\\[.03in]
Northeastern University, Boston, MA 02115, USA}

\begin{document}
\maketitle

\begin{abstract}
\noindent
Regular incidence complexes are combinatorial incidence structures generalizing regular convex polytopes, regular complex polytopes, various types of incidence geometries, and many other highly symmetric objects. The special case of abstract regular polytopes has been well-studied. The paper describes the combinatorial structure of a regular incidence complex in terms of a system of distinguished generating subgroups of its automorphism group or a flag-transitive subgroup. Then the groups admitting a flag-transitive action on an incidence complex are characterized as generalized string C-groups. Further, extensions of regular incidence complexes are studied, and certain incidence complexes particularly close to abstract polytopes, called abstract polytope complexes, are investigated. 
\end{abstract}


{\bf Key words.} ~  abstract polytope, regular polytope, C-group, incidence geometries

{\bf MSC 2010.} ~ Primary: 51M20.\  Secondary: 52B15; 51E24.

\section{Introduction}

Regular incidence complexes are combinatorial incidence structures with very high combinatorial symmetry. The concept was introduced by Danzer~\cite{dan,kom1} building on Gr\"unbaum's~\cite{grgcd} notion of a polystroma. Regular incidence complexes generalize regular convex polytopes~\cite{crp}, regular complex polytopes~\cite{cox1,shcp}, various types of incidence geometries~\cite{buc,bup,leem,tib}, and many other highly symmetric objects. The terminology and notation is patterned after convex polytopes~\cite{gcp} and was ultimately inspired by Coxeter's work on regular figures~\cite{crp,cox1}. The first systematic study of incidence complexes from the discrete geometry perspective occurred in~\cite{esdiss} and the related publications~\cite{kom1,kom2,kom3,onarr}.

The special case of abstract polytopes has recently attracted a lot of attention (see McMullen \& Schulte~\cite{arp}). Abstract polytopes (or incidence polytopes, as they were called originally) are incidence complexes close to ordinary polytopes and are in a sense topologically real.

Incidence complexes can also be viewed as incidence geometries or diagram geometries with a linear diagram (see Buekenhout-Cohen~\cite{buc}, Buekenhout-Pasini~\cite{bup}, Leemans~\cite{leem} and Tits~\cite{tib}), although here we study these structures from the somewhat different discrete geometric and combinatorial perspective of polytopes and ranked partially ordered sets.

The present paper is organized as follows. In Section~\ref{inccom}, we introduce incidence complexes following the original definition of \cite{kom1} with minor amendments. Then in Sections~\ref{groupfcom} and~\ref{comfgroups} we derive structure results for flag-transitive subgroups of regular incidence complexes and characterize these groups as what we will call here generalized C-groups, basically following \cite{esdiss,kom2} (apart from minor changes inspired by \cite{arp}). Section~\ref{regpols} explains how abstract regular polytopes fit into the more general framework of regular incidence complexes. In Section~\ref{ext} we discuss extensions of regular incidence complexes. Section~\ref{apcs} is devoted to the study of abstract polytope complexes, a particularly interesting class of regular incidence complexes which are not abstract polytopes but still relatively close to abstract polytopes. This section also describes a number of open research problems. Finally, Section~\ref{notes} collects historical notes on incidence complexes and some personal notes related to the author's work.

\section{Incidence Complexes}
\label{inccom}

Following \cite{kom1,esdiss}, an {\em incidence complex $\K$ of rank $n$\/}, or simply an {\em $n$-complex\/}, is a partially ordered set (poset), with elements called {\em faces\/}, which has the properties (I1),\ldots,(I4) described below. \smallskip

\noindent
\textbf{(I1)}\  $\K$ has a least face $F_{-1}$ and a greatest face $F_n$, called the {\it improper\/} faces. All other faces of $\K$ are {\it proper\/} faces of $\K$.
\smallskip

\noindent
\textbf{(I2)}\  Every totally ordered subset, or {\em chain\/}, of $\K$ is contained in a (maximal) totally ordered subset of $\K$ with exactly $n+2$ elements, called a {\em flag\/} of $\K$.
\smallskip

The conditions (I1) and (I2) make $\K$ into a {\it ranked} partially ordered set with a strictly monotone rank function with range $\{-1,0,\ldots,n\}$. A face of rank $i$ is called an $i$-\textit{face}.  A face of rank $0$, $1$ or $n-1$ is also called a {\em vertex\/}, an {\em edge\/} or a {\em facet}, respectively. The faces of $\K$ of ranks $-1$ and $n$ are $F_{-1}$ and $F_n$, respectively. The {\it type\/} of a chain of $\K$ is the set of ranks of faces in the chain. Thus each flag has type $\{-1,0,\ldots,n\}$; that is, each flag $\Phi$ of $\K$ contains a face of $\K$ of each rank $i$ with $i=-1,0,\ldots,n$. 

For an $i$-face $F$ and a $j$-face $G$ of $\K$ with $F \leq G$ we call
\[ G/F := \{ H \in \K \, | \, F \leq H \leq G \} \]
a \textit{section} of $\K$. This will be an incidence complex in its own right, of rank $j-i-1$. Usually we identify a $j$-face $G$ of $\K$ with the $j$-complex $G/F_{-1}$. Likewise, if $F$ is an $i$-face, the $(n-i-1)$-complex $F_n/F$ is called the {\em co-face\/} of $F$ in $\K$, or the \textit{vertex-figure} at $F$ if $F$ is a vertex.

A partially ordered set $\K$ with properties (I1) and (I2) is said to be {\em connected\/} if either $n \leq 1$, or $n \geq 2$ and for any two proper faces $F$ and $G$ of $\K$ there exists a finite sequence of proper faces $F = H_{0},H_{1},\ldots,H_{k-1},H_{k} = G$ of $\K$ such that $H_{j-1}$ and $H_{j}$ are incident for $j = 1,\ldots,k$.  We say that $\K$ is {\em strongly connected\/} if each section of $\K$ (including $\K$ itself) is connected. 

\noindent
\textbf{(I3)}\  $\K$ is {\em strongly connected.\/}

\noindent
\textbf{(I4)}\  For each $i = 0,1,\ldots,n-1$, if $F$ and $G$ are incident faces of
$\K$, of ranks $i-1$ and $i+1$ respectively, then there are {\em at least 
two\/} $i$-faces $H$ of $\K$ such that $F < H < G$.

Thus, an $n$-complex $\K$ is a partially ordered set with properties (I1),\ldots,(I4).

An {\em abstract $n$-polytope\/}, or briefly {\em $n$-polytope\/}, is an incidence complex of rank $n$ satisfying the following condition (I4P), which is stronger than (I4):

\noindent
\textbf{(I4P)}\  For each $i = 0,1,\ldots,n-1$, if $F$ and $G$ are incident faces of
$\K$, of ranks $i-1$ and $i+1$ respectively, then there are {\em exactly 
two\/} $i$-faces $H$ of $\K$ such that $F < H < G$.

We call two flags of $\K$ {\em adjacent\/} if one differs from the other in exactly one face; if this face has rank $i$, with $i=0,\ldots,n-1$, the two flags are {\em $i$-adjacent\/}. Then the conditions (I4) and (I4P) are saying that each flag has at least one or exactly one $i$-adjacent flag for each~$i$, respectively. We refer to (I4P) as the {\it diamond condition\/} (for polytopes). 

Though the above definitions of connectedness and strong connectedness are satisfactory from an intuitive point of view, in practice the following equivalent definitions in terms of flags are more useful. 

A partially ordered set $\K$ with properties (I1) and (I2) is called {\em flag-connected\/} if any two flags $\Phi$ and $\Psi$ of $\K$ can be joined by a sequence of flags $\Phi=\Phi_0,\Phi_1,\ldots,\Phi_{k-1},\Phi_{k}=\Psi$ such that successive flags are adjacent. Further, $\K$ is said to be {\em strongly flag-connected\/} if each section of $\K$ (including $\K$ itself) is flag-connected. It can be shown that $\K$ is strongly flag-connected if and only if any two flags $\Phi$ and $\Psi$ of $\K$ can be joined by a sequence of flags $\Phi=\Phi_0,\Phi_1,\ldots,\Phi_{k-1},\Phi_{k}=\Psi$, all containing $\Phi\cap\Psi$, such that successive flags are adjacent. 

It turns out that a partially ordered set $\K$ with properties (I1) and (I2) is strongly flag-connected if and only if $\K$ is strongly connected. Thus in place of (I3) we could have required the following equivalent condition 

\noindent
\textbf{(I3')}\  $\K$ is {\em strongly flag-connected.\/}

A bijection $\varphi: \K\rightarrow \CL$ from a complex $\K$ to a complex $\CL$ is called an {\em isomorphism} if $\varphi$ is order-preserving (in both directions); that is, $F\leq G$ in $\K$ if and only if $F\varphi \leq G\varphi$ in~$\CL$. An {\em automorphism\/} of a complex $\K$ is an isomorphism from $\K$ to itself. The group of all automorphisms $\Gamma(\K)$ of a complex $\K$ is called the \textit{automorphism group} of $\K$. 

A complex $\K$ is said to be \textit{regular} if $\Gamma(\K)$ is transitive on the flags of $\K$. The automorphism group of a regular complex may or may not be simply transitive on the flags. However, if $\K$ is a polytope then $\Gamma(\K)$ is simply transitive on the flags. 

\begin{lemma}
\label{isomsect}
Let $\K$ be a regular $n$-complex. Then all sections of $\K$ are regular complexes, and any two sections
which are defined by faces of the same ranks are isomorphic.  In particular, $\K$ has isomorphic facets and isomorphic vertex-figures.  
\end{lemma}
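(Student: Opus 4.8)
The plan is to reduce everything to one structural fact: an automorphism of a complex is order-preserving in both directions, hence preserves the rank function, so it carries each flag to a flag having the same face at every rank. Combined with the flag-transitivity of $\Gamma(\K)$, this will produce all the required isomorphisms and automorphisms by restriction to sections. I would establish the isomorphism of equirank sections first, then deduce regularity of each section, and finally read off the statement about facets and vertex-figures as special cases.

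For the isomorphism claim, suppose $F,F'$ have rank $i$ and $G,G'$ have rank $j$, with $F\leq G$ and $F'\leq G'$. Using (I2) I choose a flag $\Phi$ of $\K$ containing the chain $\{F,G\}$ and a flag $\Phi'$ containing $\{F',G'\}$. Flag-transitivity yields $\varphi\in\Gamma(\K)$ with $\Phi\varphi=\Phi'$; since $\varphi$ preserves rank it sends the $i$-face $F$ of $\Phi$ to the $i$-face $F'$ of $\Phi'$, and the $j$-face $G$ to $G'$. Because $\varphi$ is an order-isomorphism, $F\leq H\leq G$ holds if and only if $F'\leq H\varphi\leq G'$, so the restriction of $\varphi$ is an order-preserving bijection $G/F\to G'/F'$, which is precisely an isomorphism of the two sections.

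For regularity, fix a section $G/F$ with $F$ of rank $i$ and $G$ of rank $j$, and let $\Psi_1,\Psi_2$ be any two flags of the complex $G/F$. Each $\Psi_k$ is a chain of $\K$ and so, by (I2), extends to a flag $\Phi_k$ of $\K$; note that $\Phi_k$ carries $F$ at rank $i$ and $G$ at rank $j$, and that its faces of ranks $i,\dots,j$ are exactly $\Psi_k$. Flag-transitivity supplies $\varphi\in\Gamma(\K)$ with $\Phi_1\varphi=\Phi_2$. Since $F$ and $G$ are the $i$- and $j$-faces of both $\Phi_1$ and $\Phi_2$, rank-preservation forces $F\varphi=F$ and $G\varphi=G$, so $\varphi$ maps $G/F$ onto itself and restricts to an automorphism of $G/F$. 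Comparing faces rank by rank shows this restriction sends $\Psi_1$ to $\Psi_2$; hence $\Gamma(G/F)$ is flag-transitive and $G/F$ is regular.

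The last assertion is then immediate: any two facets are the sections $G/F_{-1}$ and $G'/F_{-1}$ defined by faces of ranks $-1$ and $n-1$, and any two vertex-figures are the co-faces $F_n/F$ and $F_n/F'$ defined by faces of ranks $0$ and $n$, so both pairs are isomorphic by the preceding paragraph. The whole argument is essentially bookkeeping once rank-preservation and flag-transitivity are combined; the only points demanding care are the two places where one checks that a rank-preserving $\varphi$ taking one flag to another restricts to a well-defined order-isomorphism of the relevant section, together with the routine verification that the chains used are genuinely flags of $\K$ (each meets every rank, hence is maximal).
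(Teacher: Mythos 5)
Your proof is correct and follows essentially the same route as the paper: extend flags of a section (or chains containing the defining faces) to flags of $\K$, invoke flag-transitivity of $\Gamma(\K)$, and use the fact that automorphisms preserve rank to see that the resulting automorphism fixes $F$ and $G$ (or carries $F,G$ to $F',G'$) and hence restricts to an automorphism (or isomorphism) of the sections. The only cosmetic difference is that the paper packages the regularity argument as the induced action of the stabilizer of a chain of type $\{-1,\ldots,i,j,\ldots,n\}$, which yields the additional observation that $\Gamma(G/F)$ contains a quotient of that stabilizer as a flag-transitive subgroup.
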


\begin{proof}
Let $F$ be an $i$-face and $G$ a $j$-face of $\K$ with $F<G$. Let $\Omega$ denote a chain of $\K$ of type 
$\{-1,0,\ldots,i-1,i,j,j+1,\ldots,n\}$ containing $F$ and $G$. Now consider the action of the stabilizer of $\Omega$ in $\Gamma(\K)$ induced on the section $G/F$ of $\K$. Since $\K$ is a regular complex and the flags of $G/F$ are just the restrictions of flags of $\K$ to $G/F$, this stabilizer is a group that acts flag-transitively on $G/F$ (but not necessarily faithfully). Thus the $(j-i-1)$-complex $G/F$ is regular and its automorphism group $\Gamma(G/F)$ contains, as a flag-transitive subgroup, a quotient of the stabilizer of $\Omega$ in $\Gamma(\K)$. (Unlike for regular polytopes this quotient may be proper.)  

Let $F'$ and $G'$ be another pair of $i$-face and $j$-face with $F'<G'$, and let $\Psi'$ be a flag of $\K$ containing $F'$ and $G'$. Then each automorphism of $\K$ mapping $\Psi$ to $\Psi'$ induces an isomorphism from $G/F$ to $G'/F'$. Thus $G'/F'$ is isomorphic to $G/F$.
\end{proof}

\section{Flag-transitive subgroups of the automorphism group}
\label{groupfcom} 

In this section we establish structure results for flag-transitive subgroups $\Gamma$ of the automorphism group $\Gamma(\K)$ of a regular complex $\K$. We follow \cite[Sect. 2]{kom2} (and \cite{esdiss}) and show that any such group (including $\Gamma(\K)$ itself) has a distinguished system of generating subgroups obtained as follows. For corresponding results for regular polytopes see \cite[Ch. 2B]{arp}.

Throughout this section let $\K$ be a regular $n$-complex, with $n\geq 1$, and let $\Gamma$ be a flag-transitive subgroup of $\Gamma(\K)$. Define $N:=\{-1,0,\ldots,n\}$ and for $J\subseteq N$ set $\overline{J}:=N\setminus J$. Let $\Phi:=\{F_{-1},F_0,\ldots,F_n\}$ be a fixed, or {\em base flag\/}, of $\K$, where~$F_i$ designates the $i$-face in $\Phi$ for each $i\in N$. For each $\Omega\subseteq \Phi$ let $\Gamma_\Omega$ denote the stabilizer of $\Omega$ in $\Gamma$. In particular, $\Gamma_\Phi$ is the stabilizer of the base flag $\Phi$, and $\Gamma_\emptyset = \Gamma$. For $i\in N$ define the subgroup $R_i$ of $\Gamma$ as
\begin{equation}
\label{defri}
R_{i} :=  \Gamma_{\Phi\setminus\{F_i\}} = \langle \varphi\in \Gamma \mid F_j\varphi =F_j \mbox{ for all } j\neq i\rangle .
\end{equation}
Then each $R_i$ contains $\Gamma_\Phi$ as a subgroup, and $R_{-1}=\Gamma_\Phi=R_{n}$. 

For $i=0,\ldots,n-1$ let $k_{i}$ denote the number of $i$-faces of $\K$ in a section $G/F$, where $F$ is an $(i-1)$-face and $G$ an $(i+1)$-face with $F<G$; since $\K$ is regular, this number is independent of the choice of $F$ and $G$. Then
\begin{equation}
\label{kis}
k_{i} = |R_{i}:R_{-1}| \quad (i=0,\ldots,n-1). 
\end{equation} 
Note that each flag of $\K$ has exactly $k_{i}-1$ flags $i$-adjacent to it for each $i$.  

If $\K$ is a regular polytope then $R_{i}$ is generated by an involution $\rho_i$ for $i=0,\ldots,n-1$, and the subgroups $R_{-1}$ and $R_{n}$ are trivial. In this case $k_{i}=2$ for each $i$.

Our first goal is to describe the stabilizers of the subchains of the base flag $\Phi$. For $J\subseteq N$ set 
$\Phi_{J} := \{F_{j} \in \Phi \mid j \in J\}$. 

\begin{lemma}
\label{regstab}
For $J\subseteq N$ we have $\Gamma_{\Phi_J}=\langle R_{j}\mid j\in \overline{J}\,\rangle$. 
\end{lemma}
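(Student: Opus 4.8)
The plan is to prove the two inclusions separately; the inclusion $\langle R_j\mid j\in\overline J\rangle\subseteq\Gamma_{\Phi_J}$ is immediate, while the reverse inclusion is where the work lies and will rest on strong flag-connectedness. For the easy direction, recall that each generator $R_j$ with $j\in\overline J$ consists of automorphisms fixing every $F_i$ with $i\neq j$; since $j\notin J$, every such automorphism fixes $F_i$ for all $i\in J$, so $R_j\subseteq\Gamma_{\Phi_J}$. As $\Gamma_{\Phi_J}$ is a subgroup it contains the group generated by these $R_j$.

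For the reverse inclusion I would take $\varphi\in\Gamma_{\Phi_J}$ and set $\Psi:=\Phi\varphi$. Since $\varphi$ fixes $F_j$ for every $j\in J$, the $j$-face of $\Psi$ equals $F_j$ for each such $j$, so $\Phi_J\subseteq\Phi\cap\Psi$. Invoking the strong flag-connectedness of $\K$ in the form (I3'), I can join $\Phi$ to $\Psi$ by a sequence of flags $\Phi=\Phi^0,\Phi^1,\ldots,\Phi^m=\Psi$ in which successive flags are adjacent and every $\Phi^t$ contains $\Phi\cap\Psi$, hence contains $\Phi_J$. The crucial observation is that if $\Phi^{t-1}$ and $\Phi^t$ are $i_t$-adjacent then $i_t\in\overline J$: both flags contain all faces of $\Phi_J$, and since a flag has a unique face of each rank, the rank $i_t$ at which they differ cannot lie in $J$.

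The engine of the argument is the following consequence of flag-transitivity: whenever a flag $\Phi'$ is $i$-adjacent to $\Phi$, there is an element of $R_i$ carrying $\Phi$ to $\Phi'$. Indeed, choose $\gamma\in\Gamma$ with $\Phi\gamma=\Phi'$; for every $j\neq i$ the $j$-faces of $\Phi$ and $\Phi'$ coincide and equal $F_j$, and $\gamma$ maps the $j$-face of $\Phi$ to that of $\Phi'$, so $F_j\gamma=F_j$ for all $j\neq i$, i.e.\ $\gamma\in R_i$. I then build up $\varphi$ by induction on $t$, proving that each $\Phi^t$ equals $\Phi\sigma_t$ for some $\sigma_t\in\langle R_j\mid j\in\overline J\rangle$. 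The base case is $\sigma_0=\mathrm{id}$. For the step, $\Phi^t\sigma_{t-1}^{-1}$ is $i_t$-adjacent to $\Phi^{t-1}\sigma_{t-1}^{-1}=\Phi$, so the observation yields $\rho\in R_{i_t}$ with $\Phi\rho=\Phi^t\sigma_{t-1}^{-1}$; setting $\sigma_t:=\rho\sigma_{t-1}$ gives $\Phi\sigma_t=\Phi^t$, and $\sigma_t\in\langle R_j\mid j\in\overline J\rangle$ because $i_t\in\overline J$.

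Taking $t=m$ gives $\Psi=\Phi\sigma_m$ with $\sigma_m\in\langle R_j\mid j\in\overline J\rangle$, so $\varphi\sigma_m^{-1}$ fixes the base flag $\Phi$ and hence lies in $\Gamma_\Phi$. Since $\Gamma_\Phi=R_{-1}=R_n$ is contained in every $R_j$, this residual factor, and therefore $\varphi$ itself, lies in $\langle R_j\mid j\in\overline J\rangle$ whenever $\overline J\neq\emptyset$, which completes the reverse inclusion in all nondegenerate cases. I expect the main obstacle to be the bookkeeping of the inductive step rather than any isolated difficulty: one must verify that each intermediate adjacency has rank in $\overline J$ and that the pulled-back flag $\Phi^t\sigma_{t-1}^{-1}$ is genuinely $i_t$-adjacent to $\Phi$, after which flag-transitivity supplies the needed generator. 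All of the real content is carried by condition (I3') together with the flag-transitivity of $\Gamma$.
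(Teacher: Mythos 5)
Your proposal is correct and follows essentially the same route as the paper: both reduce to showing that $\langle R_j\mid j\in\overline J\rangle$ carries $\Phi$ to any flag containing $\Phi_J$, using strong flag-connectedness (I3') to produce a chain of adjacent flags all containing $\Phi_J$, and then inducting along that chain, absorbing the residual $\Gamma_\Phi$-factor at the end. Your explicit verification that an $i$-adjacency is realized by an element of $R_i$ is exactly the step the paper invokes via flag-transitivity, so there is nothing substantively different to report.
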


\begin{proof}
Let $\Lambda:=\langle R_{j}\mid j\in \overline{J}\,\rangle$. It is clear that $\Lambda$ is a subgroup of $\Gamma_{\Phi_J}$, since each subgroup $R_j$ with $j\in \overline{J}$ stabilizes $\Phi_J$. To prove equality of the two groups, note first that $\Gamma_{\Phi_J}$ acts transitively on the set of all flags $\Psi$ of $\K$ with $\Phi_{J} \subseteq \Psi$.  Hence, since the base flag stabilizer $\Gamma_\Phi$ lies in $\Gamma_{\Phi_J}$, it suffices to show that $\Lambda$ also acts transitively on these flags. 

Let $\Psi$ be a flag with $\Phi_{J} \subseteq \Psi$. We show that $\Psi$ lies in the orbit of $\Phi$ under $\Lambda$. Choose a sequence of flags 
\[  \Phi = \Phi_{0},\Phi_{1},\dots,\Phi_{k-1},\Phi_{k} = \Psi,  \]
all containing $\Phi_{J}$, such that successive flags are adjacent.  We proceed by induction on~$k$, the case $k = 0$ being trivial.  By the inductive hypothesis, there exists $\psi \in \Lambda$ such that $\Phi\psi = \Phi_{k-1}$. We know that $\Phi_{k-1}$ and $\Psi = \Phi_{k}$ are $j$-adjacent flags for some $j$, so $\Phi=\Phi_{k-1}\psi^{-1}$ and $\Phi_k\psi^{-1}$ are also $j$-adjacent. By the flag-transitivity of $\Gamma$ there exists an element $\tau\in R_j$ such that $\Phi_k\psi^{-1}=\Phi\tau$ and hence $\Psi = \Phi\tau\psi$. But $j \notin J$, since $\Phi_{J} \subseteq\Phi_{i}$ for each $i$, so $\tau,\psi\in\Lambda$ and hence $\tau\psi\in\Lambda$. Thus $\Psi$ lies in the orbit of $\Phi$ under $\Lambda$.
\end{proof}

As the subgroups $R_{-1}$ and $R_n$ lie in $R_j$ for each $j$, the previous lemma with $J=\emptyset$ immediately implies 

\begin{lemma}
\label{disting}
$\Gamma = \langle R_{-1},R_0,\ldots,R_{n} \rangle = \langle R_0,\ldots,R_{n-1} \rangle$.
\end{lemma}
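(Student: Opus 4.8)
The plan is to derive both equalities directly from Lemma~\ref{regstab}, which already carries all the substance; the present statement is merely the special case $J=\emptyset$ together with a cosmetic trimming of the generating set. First I would establish the left-hand equality. Taking $J=\emptyset$ in Lemma~\ref{regstab}, the subchain $\Phi_{\emptyset}$ is empty, so its stabilizer is all of $\Gamma$, that is, $\Gamma_{\Phi_\emptyset}=\Gamma_\emptyset=\Gamma$. On the other hand $\overline{J}=\overline{\emptyset}=N=\{-1,0,\ldots,n\}$, so the lemma reads
\[
\Gamma=\Gamma_{\Phi_\emptyset}=\langle R_j\mid j\in N\rangle=\langle R_{-1},R_0,\ldots,R_n\rangle,
\]
which is exactly the first equality.

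Next I would show that the two boundary subgroups $R_{-1}$ and $R_n$ may be deleted from the generating set. Recall from the discussion preceding Lemma~\ref{regstab} that $R_{-1}=\Gamma_\Phi=R_n$ and that $\Gamma_\Phi$ is contained in every $R_j$. Since $n\geq 1$, the index set $\{0,\ldots,n-1\}$ is nonempty, so there is at least one generator $R_j$ with $0\leq j\leq n-1$, and
\[
R_{-1}=R_n=\Gamma_\Phi\subseteq R_j\subseteq\langle R_0,\ldots,R_{n-1}\rangle .
\]
Hence $R_{-1}$ and $R_n$ already lie in the group generated by the remaining subgroups, and adjoining them changes nothing, so $\langle R_{-1},R_0,\ldots,R_n\rangle=\langle R_0,\ldots,R_{n-1}\rangle$. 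Combining this with the previous display yields the claim.

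There is no genuine obstacle here: the result is a one-step corollary of Lemma~\ref{regstab}. The only point that deserves a word of care is that the absorption of $R_{-1}$ and $R_n$ into the proper generators uses the standing hypothesis $n\geq 1$, which guarantees that at least one $R_j$ with $0\leq j\leq n-1$ is available to contain $\Gamma_\Phi$; everything else is immediate from the already-recorded identities $R_{-1}=\Gamma_\Phi=R_n$.
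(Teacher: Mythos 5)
Your proof is correct and follows exactly the paper's route: the first equality is Lemma~\ref{regstab} with $J=\emptyset$, and the second follows because $R_{-1}=R_n=\Gamma_\Phi$ is contained in each $R_j$ with $0\leq j\leq n-1$. The only difference is that you spell out the role of $n\geq 1$, which the paper leaves implicit.
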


The subgroups $R_{-1},R_{0},\ldots,R_{n}$ of $\Gamma$ are called the {\it distinguished generating subgroups\/} of~$\Gamma$ ({\it with respect to\/} $\Phi$). 

For each $I\subseteq N$, $I\neq \emptyset$, define the subgroup $\Gamma_{I} := \langle R_i \mid i\in I\rangle$. For $I=\emptyset$ we set $\Gamma_{\emptyset} := R_{-1}$. Then by Lemma~\ref{regstab}, 
\begin{equation}
\label{stab1}
\Gamma_{I} = \Gamma_{\{F_i\mid\, i\in \overline{I}\,\}} = \Gamma_{\Phi_{\,\overline{I}}} \quad (I\subseteq N);
\end{equation}
or equivalently, 
\begin{equation}
\label{stab2}
\Gamma_\Omega = \Gamma_{\{i\mid F_i \not\in\Omega\}} \qquad (\Omega\subseteq \Phi). 
\end{equation}
The subgroups $\Gamma_I$, with $I\subseteq N$, are called the {\em distinguished subgroups\/} of $\Gamma$ ({\it with respect to\/}~$\Phi$). Note that the notation $\Gamma_\emptyset$ can have two meanings, namely as $\Gamma_I$ with $I=\emptyset$, and as $\Gamma_\Omega$ with $\Omega=\emptyset$. The intended meaning should be clear from the context. 

The distinguished subgroups satisfy the following important {\it intersection property\/} (with respect to the distinguished generating subgroups):

\begin{lemma}
\label{intprop}
For $I,J\subseteq N$ we have $\Gamma_I \cap \Gamma_J = \Gamma_{I\cap J}$.
\end{lemma}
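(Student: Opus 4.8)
The plan is to prove the intersection property $\Gamma_I \cap \Gamma_J = \Gamma_{I\cap J}$ by translating it into a statement about flag stabilizers, where the combinatorics of the poset can be exploited directly. Using the identification \eqref{stab1} from the excerpt, each distinguished subgroup $\Gamma_I$ equals the stabilizer $\Gamma_{\Phi_{\overline{I}}}$ of a subchain of the base flag. Let me think about how the set-theoretic structure of the problem works.

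Let me reason carefully. Write $\Gamma_I = \Gamma_{\Phi_{\overline I}}$ and $\Gamma_J = \Gamma_{\Phi_{\overline J}}$. The containment $\Gamma_{I\cap J} \subseteq \Gamma_I \cap \Gamma_J$ is immediate, since $R_i \subseteq \Gamma_I$ and $R_i \subseteq \Gamma_J$ for every $i \in I\cap J$, so the subgroup they generate lies in the intersection. The substance is the reverse inclusion.

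Here is the subtlety I must confront. By \eqref{stab1}, $\Gamma_I \cap \Gamma_J = \Gamma_{\Phi_{\overline I}} \cap \Gamma_{\Phi_{\overline J}}$, and this is the set of automorphisms in $\Gamma$ fixing every face in $\Phi_{\overline I} \cup \Phi_{\overline J} = \Phi_{\overline I \cup \overline J} = \Phi_{\overline{I\cap J}}$. Thus $\Gamma_I \cap \Gamma_J = \Gamma_{\Phi_{\overline{I\cap J}}}$, which by \eqref{stab1} applied with the index set $I\cap J$ equals $\Gamma_{I\cap J}$ exactly. So the whole proof reduces to the observation that a stabilizer of a union of chains is the intersection of the individual stabilizers, combined with the de~Morgan identity $\overline I \cup \overline J = \overline{I\cap J}$.

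**The main obstacle** is not the logic above but making sure the reduction is airtight, and in particular verifying that $\Gamma_{\Phi_{\overline I}} \cap \Gamma_{\Phi_{\overline J}} = \Gamma_{\Phi_{\overline I} \cup \Phi_{\overline J}}$. This equality says that fixing every face in a union of two sets is the same as simultaneously fixing every face in each set, which is transparent at the level of pointwise stabilizers of face-sets and requires no deep input. The real content is entirely carried by Lemma~\ref{regstab}, which supplies the translation $\Gamma_I = \Gamma_{\Phi_{\overline I}}$ in both directions; once that identity is invoked twice (to pass to stabilizers and to pass back), the result is immediate. I would present the argument as a short chain of equalities: $\Gamma_I \cap \Gamma_J = \Gamma_{\Phi_{\overline I}} \cap \Gamma_{\Phi_{\overline J}} = \Gamma_{\Phi_{\overline I}\cup\Phi_{\overline J}} = \Gamma_{\Phi_{\overline{I\cap J}}} = \Gamma_{I\cap J}$, taking care to note the edge cases $I=\emptyset$ or $J=\emptyset$, where the convention $\Gamma_\emptyset = R_{-1}$ must be checked to be consistent (there $\Phi_{\overline\emptyset} = \Phi_N = \Phi$, and $\Gamma_\Phi = R_{-1}$, so the convention indeed matches).
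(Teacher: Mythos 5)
Your proof is correct and is essentially identical to the paper's: both invoke \eqref{stab1} (i.e.\ Lemma~\ref{regstab}) to identify each $\Gamma_I$ with the chain stabilizer $\Gamma_{\Phi_{\overline I}}$, and then run the chain of equalities $\Gamma_{\Phi_{\overline I}} \cap \Gamma_{\Phi_{\overline J}} = \Gamma_{\Phi_{\overline I}\cup\Phi_{\overline J}} = \Gamma_{\Phi_{\overline{I\cap J}}} = \Gamma_{I\cap J}$. Your extra check of the $I=\emptyset$ convention is a harmless refinement the paper leaves implicit.
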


\begin{proof}
This follows from the fact that the subgroups involved are stabilizers of subchains of $\Phi$, as expressed in 
equation~(\ref{stab1}). In fact, 
\[ \Gamma_I \cap \Gamma_J 
= \Gamma_{\Phi_{\,\overline{I}}} \cap \Gamma_{\Phi_{\,\overline{J}}}
= \Gamma_{\Phi_{\,\overline{I}}\, \cup\, \Phi_{\,\overline{J}}}
= \Gamma_{\Phi_{\,\overline{I\cap J}}}
= \Gamma_{I\cap J} .\]
Thus the lemma follows.
\end{proof}

When $n\geq 2$ we often omit $R_{-1}$ and $R_n$ from the system of generating subgroups and also refer to 
$R_{0},\ldots,R_{n-1}$ as the {\it distinguished generating subgroups\/}. In fact, in this case Lemma~\ref{intprop} shows that 
\[R_{-1} = R_{n}= R_{0}\cap \ldots \cap R_{n-1},\]
so $R_{-1}$ and $R_n$ are completely determined by $R_0,\ldots,R_{n-1}$. However, for the system of generating subgroups to also permit a characterization of the combinatorial structure of $\K$ when $n=1$, the two subgroups $R_{-1}$ and $R_{n}=R_1$ (with $R_{-1}=R_1$) must be included in the system; that is, $R_{-1}$ and $R_1$ are not determined by $R_{0}$ alone. 

The distinguished generating subgroups have the following commuting properties, which hold at the level of groups, but not generally at the level of elements. 

\begin{lemma}
\label{commu}
For $-1\leq i < j-1 \leq n-1$ we have $R_i R_j = \langle R_i,R_j \rangle = R_j R_i$. 
\end{lemma}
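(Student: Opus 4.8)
The plan is to reduce the statement to the single assertion that the product set $R_iR_j$ exhausts $\langle R_i,R_j\rangle$. I would invoke the standard group-theoretic fact that, for subgroups $A,B$ of any group, $AB$ is a subgroup if and only if $AB=BA$, in which case $AB=BA=\langle A,B\rangle$. Hence it suffices to prove $R_iR_j=\langle R_i,R_j\rangle$: this exhibits $R_iR_j$ as a subgroup containing $R_i$ and $R_j$, and therefore forces the remaining equality $R_iR_j=R_jR_i$ for free. The boundary cases are immediate: if $i=-1$ or $j=n$ then $R_i=\Gamma_\Phi$ or $R_j=\Gamma_\Phi$, and since $\Gamma_\Phi$ is contained in every $R_k$ the claim is trivial. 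So I may assume $0\le i$ and $j\le n-1$.

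Next I would set $G:=\langle R_i,R_j\rangle$. By Lemma~\ref{regstab}, equivalently by~(\ref{stab1}) with $I=\{i,j\}$, this equals the stabilizer $\Gamma_{\Phi\setminus\{F_i,F_j\}}$ of the chain obtained from the base flag by deleting $F_i$ and $F_j$; and as in the proof of that lemma, $G$ acts transitively on the set $X$ of all flags containing $\Phi\setminus\{F_i,F_j\}$. The hypothesis $j\ge i+2$ is exactly what makes the two missing ranks independent: in the reduced chain $F_{i-1}$ and $F_{i+1}$ are consecutive, as are $F_{j-1}$ and $F_{j+1}$, so a flag in $X$ is specified by an arbitrary pair $(A,B)$, where $A$ is an $i$-face with $F_{i-1}<A<F_{i+1}$ and $B$ is a $j$-face with $F_{j-1}<B<F_{j+1}$; the incidence $A<B$ is automatic since $A<F_{i+1}\le F_{j-1}<B$. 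Writing $S_i$, $S_j$ for the sets of such $A$ and $B$, I obtain $X\cong S_i\times S_j$ with $\Phi$ corresponding to $(F_i,F_j)$.

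The key step is to show that the \emph{product set} $R_iR_j$ already acts transitively on $X$. Every $\rho\in R_i$ fixes $F_j$, so $\Phi R_i=S_i\times\{F_j\}$. Now fix a target $(A',B')\in S_i\times S_j$. Here lies the main obstacle, and the one place where incidence complexes genuinely differ from polytopes: one would like a $\sigma\in R_j$ carrying $F_j$ to $B'$ to fix the $i$-face $A'$, but this can fail, because although $A'<F_{j-1}$ and $\sigma$ fixes $F_{j-1}$, a flag-stabilizing automorphism of a complex may act nontrivially below $F_{j-1}$ (precisely because $\Gamma_\Phi$ need not be trivial). The resolution is that $\sigma$ does fix $F_{i-1}$ and $F_{i+1}$ (as $i\pm1\ne j$), hence permutes $S_i$ among itself; so I choose $\sigma\in R_j$ with $F_j\sigma=B'$ and then $\rho\in R_i$ with $F_i\rho=A'\sigma^{-1}\in S_i$, and compute $\Phi(\rho\sigma)=(A',B')$. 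Thus $\Phi(R_iR_j)=X$.

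Finally I would convert this transitivity into the group equality. For any $g\in G$ the flag $\Phi g$ lies in $X=\Phi(R_iR_j)$, so $\Phi g=\Phi h$ for some $h\in R_iR_j$; then $gh^{-1}$ stabilizes $\Phi$, i.e.\ $gh^{-1}\in\Gamma_\Phi\subseteq R_i$, and writing $h=\rho\sigma$ gives $g=(gh^{-1}\rho)\sigma\in R_iR_j$. Hence $G=R_iR_j$, and the reduction in the first paragraph finishes the proof. I expect the transitivity computation of the third paragraph to be the crux: the naive attempt to make $\sigma$ fix $A'$ breaks down exactly where $\Gamma_\Phi$ is nontrivial, and one must instead exploit that $\sigma$ preserves $S_i$ (which uses $j\ge i+2$) and pre-correct the $i$-face by $\sigma^{-1}$; the intersection property $R_i\cap R_j=\Gamma_\Phi$ (Lemma~\ref{intprop}) underlies the bookkeeping and yields, in the finite case, the same conclusion by a direct index count $|R_iR_j|=k_ik_j\,|\Gamma_\Phi|=|G|$.
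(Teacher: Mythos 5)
Your proof is correct and follows essentially the same route as the paper's: the paper takes $\varphi\in\langle R_i,R_j\rangle$, notes it fixes $F_k$ for all $k\neq i,j$, finds $\psi\in R_i$ with $F_i\psi=F_i\varphi$ (possible because $i<j-1$ keeps $F_i\varphi$ in the section $F_{i+1}/F_{i-1}$), then $\tau\in R_j$ with $F_j\tau=F_j\varphi\psi^{-1}$, and absorbs the leftover element of $\Gamma_\Phi$ into $R_j$ --- which is exactly your transitivity argument on $S_i\times S_j$ with the two correction steps performed in the opposite order. The point you identify as the crux (that $j\geq i+2$ makes the adjustment at rank $j$ preserve the set $S_i$, and that the residual flag stabilizer must be swallowed by one of the factors) is precisely where the paper's argument does its work as well.
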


\begin{proof}
This is trivial when $i=-1$ or $j=n$. Now suppose $-1< i < j-1 < n-1$. Clearly, it suffices to show that 
$\langle R_i,R_j \rangle=R_i R_j $. Here the inclusion $\supseteq$ is trivial. 

To establish the opposite inclusion let $\varphi\in \langle R_i,R_j \rangle$. Then $\varphi$ fixes $F_k$ for each $k\neq i,j$, since both $R_{i}$ and $R_j$ fix $F_k$.  Hence, since $i<j-1$, there exists an element $\psi\in R_i$ such that $F_{i}\psi=F_{i}\varphi$. Then $F_{k}\varphi\psi^{-1}=F_k$ for each $k\neq j$. But then there also exists an element $\tau\in R_j$ such that $F_{j}\tau=F_{j}\varphi\psi^{-1}$. Hence $F_{k}\varphi\psi^{-1}\tau^{-1} = F_k$ for each $k$, and therefore $\varphi\psi^{-1}\tau^{-1}\in \Gamma_\Phi$.  But  
$\Gamma_{\Phi}$ is a subgroup of $R_j$, so we have
\[\varphi\in\Gamma_{\Phi}\tau\psi \subseteq \Gamma_{\Phi} R_j R_i = R_jR_i,\] 
as required. This completes the proof
\end{proof}

The commuting properties of Lemma~\ref{commu} do not generally extend to the case when $j=i+1$. It can be shown that if $\K$ is a lattice, then 
\begin{equation}
\label{latri}
R_{i}R_{i+1} \cap R_{i+1}R_i \,=\, R_{i} \cup R_{i+1} \quad (i=0,\dots,n-2). 
\end{equation}
Recall that a lattice is a partially ordered set in which every two elements have a supremum (a least upper bound) and an infimum (a greatest lower bound)~\cite{st}.

We introduce some further notation. For each $i\in N$ we write 
\[ \Gamma_{i}   := \Gamma_{N\setminus \{i\}} = \langle R_j \mid j\neq i\rangle \]
and
\[\begin{array}{rllll}
\Gamma_{i}^{-} &:= &\Gamma_{\{-1,0,\ldots,i\}} &\!\!=& \langle R_j \mid j\leq i\rangle, \\[.03in]
\Gamma_{i}^{+}&:= &\Gamma_{\{i,\ldots,n\}}    &\!\!=& \langle R_j \mid j\geq i\rangle. 
\end{array} \]
Note that $\Gamma_{-1}=\Gamma_{n}=\Gamma$. As an immediate consequence of the commutation rules of Lemma~\ref{commu} we have 
\begin{equation}
\label{gamij}
\Gamma_{i}^{-}\Gamma_{j}^{+} = \Gamma_{j}^{+}\Gamma_{i}^{-}\qquad (-1\leq i < j-1\leq n-1). 
\end{equation}
Further, for each $i\in N$, 
\begin{equation}
\label{gami}
\Gamma_{i} = \Gamma_{i-1}^{-}\Gamma_{i+1}^{+} = \Gamma_{i+1}^{+}\Gamma_{i-1}^{-}.
\end{equation}

Observe that when $-1\leq i\leq j\leq n$ the distinguished subgroup $\langle R_{i+1},\ldots,R_{j-1}\rangle$ of $\Gamma$ 
acts flag-transitively (but generally not faithfully) on the section $F_{j}/F_i$ of $\K$ between the base $i$-face and the base $j$-face. The quotient of $\langle R_{i+1},\ldots,R_{j-1}\rangle$ defined by the kernel of this action is a (generally proper) flag-transitive subgroup of $\Gamma(F_{j}/F_{i})$. In particular, $\Gamma_{i-1}^{-}$ acts flag-transitively on the base $i$-face $F_{i}/F_{-1}$ of $\K$,  and $\Gamma_{i+1}^{+}$ acts flag-transitively on the co-face $F_{n}/F_{i}$ of the base $i$-face of $\K$. 

Our next goal is the characterization of the structure of a regular complex $\K$ in terms of the distinguished generating subgroups $R_{-1},R_{0},\ldots,R_{n}$ of the chosen flag-transitive subgroup $\Gamma$ of $\Gamma(\K)$. By the transitivity properties of $\Gamma$ we can write each $i$-face of $\K$ in the form $F_{i}\varphi$ with 
$\varphi \in \Gamma$. We begin with a lemma.

\begin{lemma}
\label{inclem}
Let $0 \leq i \leq j \leq n-1$, and let $G_{i}$ be an $i$-face of $\K$.  Then $G_{i} \leq F_{j}$ if and only if $G_{i} = F_{i}\gamma$ for some $\gamma \in\Gamma_{j}$.
\end{lemma}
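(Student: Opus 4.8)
The plan is to prove the two implications separately, using the identification of $\Gamma_j$ as a face stabilizer for the forward direction and the flag-transitive action on a section for the reverse.

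For the ``if'' direction, I would first note that $\Gamma_j$ is precisely the stabilizer of the base $j$-face $F_j$. Indeed, by equation~(\ref{stab1}), $\Gamma_j = \Gamma_{N\setminus\{j\}} = \Gamma_{\Phi_{\{j\}}} = \Gamma_{\{F_j\}}$, so every $\gamma\in\Gamma_j$ satisfies $F_j\gamma = F_j$. Since $F_i$ and $F_j$ both lie in the base flag $\Phi$ with $i\leq j$, we have $F_i\leq F_j$, and because automorphisms preserve order, $G_i = F_i\gamma \leq F_j\gamma = F_j$. This settles the implication.

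For the ``only if'' direction, suppose $G_i\leq F_j$. Then $G_i$ is an $i$-face of the section $F_j/F_{-1}$, that is, of the base $j$-face regarded as a $j$-complex, whose base flag is the restriction $\{F_{-1},\ldots,F_j\}$ of $\Phi$ and whose $i$-face is $F_i$. I would extend $G_i$ to a flag $\Psi$ of this section and then invoke the flag-transitive action of $\Gamma_{j-1}^{-}$ on $F_j/F_{-1}$ recorded in the paragraph preceding the lemma: there is a $\gamma\in\Gamma_{j-1}^{-}$ whose induced action on the section carries the base flag to $\Psi$, so that $F_i\gamma = G_i$ within the section. Finally, since $\Gamma_{j-1}^{-} = \langle R_k \mid k\leq j-1\rangle \subseteq \langle R_k\mid k\neq j\rangle = \Gamma_j$, this $\gamma$ lies in $\Gamma_j$, as required.

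The step requiring care is the transfer from the induced action on the section to the action on $\K$ itself, since a priori $\Gamma_{j-1}^{-}$ acts on $F_j/F_{-1}$ only through a (possibly proper) quotient. The resolution is that every generator $R_k$ with $k\leq j-1$ fixes both $F_{-1}$ and $F_j$, so every element of $\Gamma_{j-1}^{-}$ maps the section $F_j/F_{-1}$ onto itself and acts on it by genuine restriction. Hence the equality $F_i\gamma = G_i$ produced by flag-transitivity holds as an identity of faces of $\K$, not merely in a quotient, which is exactly what the conclusion $G_i = F_i\gamma$ demands.
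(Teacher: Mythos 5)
Your proof is correct. The forward direction is exactly the paper's. For the converse the paper argues slightly more directly: it extends $\{G_i,F_j\}$ to a flag $\Psi$ of $\K$ and notes that, since $F_j\in\Phi\cap\Psi$, the transitivity of the stabilizer $\Gamma_{\{F_j\}}=\Gamma_j$ on the flags containing $F_j$ (established in the proof of Lemma~\ref{regstab}) gives $\Psi=\Phi\gamma$ with $\gamma\in\Gamma_j$, whence $G_i=F_i\gamma$. You instead pass to the section $F_j/F_{-1}$ and invoke the flag-transitivity of $\Gamma_{j-1}^{-}$ on it; since that fact is itself a corollary of Lemma~\ref{regstab} (it is the transitivity of the stabilizer of the chain $\{F_{-1},F_j,F_{j+1},\ldots,F_n\}$ on the flags of $\K$ containing that chain), the two arguments rest on the same key lemma, and yours is essentially the paper's with the additional observation that $\gamma$ may even be chosen in the smaller subgroup $\Gamma_{j-1}^{-}\subseteq\Gamma_j$. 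Your closing paragraph correctly disposes of the only delicate point, namely that $\Gamma_{j-1}^{-}$ acts on the section by genuine restriction, so the equality $F_i\gamma=G_i$ holds among faces of $\K$ and not merely in a quotient acting on the section.
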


\begin{proof}
If $G_{i} = F_{i}\gamma$ with $\gamma \in \Gamma_{j}$, then $G_{i} \leq F_{j}\gamma =
F_{j}$, as claimed.  For the converse, let $\Psi$ be any flag of $\K$ such
that $\{G_{i},F_{j}\} \subseteq \Psi$.  Then, by Lemma~\ref{regstab},
$F_{j} \in \Phi \cap \Psi$ implies that $\Psi = \Phi\gamma$ for some $\gamma \in
\Gamma_{\{F_{j}\}} = \Gamma_{j}$.  Thus $G_{j} = F_{j}\gamma$, as required.
\end{proof}

We now have the following characterization of the partial order in $\K$.

\begin{lemma}
\label{incchar}
Let $0 \leq i \leq j \leq n-1$, and let $\varphi,\,\psi \in \Gamma$.  Then the
following three conditions are equivalent:\\[.05in]
(a)\ $F_{i}\varphi \leq F_{j}\psi$;\\[.03in]
(b)\ $\varphi\psi^{-1} \in \Gamma_{i+1}^{+}\Gamma_{j-1}^{-}$;\\[.03in]
(c)\ $\Gamma_{i}\varphi \cap \Gamma_{j}\psi \neq \emptyset$.
\end{lemma}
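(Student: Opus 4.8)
The plan is to collapse all three conditions to a single membership statement and then verify each of (a), (b), (c) against it. Writing $\chi := \varphi\psi^{-1}$, I would first normalize by right-translation. Since automorphisms preserve order, applying $\psi^{-1}$ shows (a) is equivalent to $F_{i}\chi \leq F_{j}$. Since right-multiplication by $\psi^{-1}$ is a bijection of $\Gamma$ and hence commutes with intersection, (c) is equivalent to $\Gamma_{i}\chi \cap \Gamma_{j} \neq \emptyset$. Finally (b) reads $\chi \in \Gamma_{i+1}^{+}\Gamma_{j-1}^{-}$ verbatim. So it suffices to prove that each of these three reduced statements is equivalent to $\chi \in \Gamma_{i}\Gamma_{j}$, where the product of subgroups is viewed merely as a subset of $\Gamma$.

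I would next dispose of (a) and (c), which are the routine half. For (a), recall from~(\ref{stab1}) that the stabilizer of the single face $F_{i}$ is exactly $\Gamma_{i}=\Gamma_{N\setminus\{i\}}$. Applying Lemma~\ref{inclem} to the $i$-face $G_{i}=F_{i}\chi$, the relation $F_{i}\chi \leq F_{j}$ holds if and only if $F_{i}\chi = F_{i}\gamma$ for some $\gamma \in \Gamma_{j}$, i.e.\ if and only if $\chi\gamma^{-1}$ fixes $F_{i}$, i.e.\ $\chi \in \Gamma_{i}\gamma$ for some $\gamma\in\Gamma_{j}$; taking the union over $\gamma$ this says precisely $\chi\in\Gamma_{i}\Gamma_{j}$. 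For (c), $\Gamma_{i}\chi\cap\Gamma_{j}\neq\emptyset$ means $g\chi=h$ for some $g\in\Gamma_{i}$, $h\in\Gamma_{j}$, equivalently $\chi=g^{-1}h\in\Gamma_{i}\Gamma_{j}$, and the converse is immediate. Thus (a) $\Leftrightarrow \chi\in\Gamma_{i}\Gamma_{j} \Leftrightarrow$ (c).

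The heart of the matter, and the step I expect to be the main obstacle, is the purely group-theoretic identity
\[ \Gamma_{i}\Gamma_{j} \,=\, \Gamma_{i+1}^{+}\Gamma_{j-1}^{-}, \]
which is what ties (b) to the other two. I would prove it by expanding $\Gamma_{i}=\Gamma_{i+1}^{+}\Gamma_{i-1}^{-}$ and $\Gamma_{j}=\Gamma_{j-1}^{-}\Gamma_{j+1}^{+}$ via~(\ref{gami}), giving $\Gamma_{i}\Gamma_{j}=\Gamma_{i+1}^{+}\,\Gamma_{i-1}^{-}\Gamma_{j-1}^{-}\,\Gamma_{j+1}^{+}$. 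Since $i\leq j$ yields the containments $\Gamma_{i-1}^{-}\subseteq\Gamma_{j-1}^{-}$ and $\Gamma_{j+1}^{+}\subseteq\Gamma_{i+1}^{+}$, the two inner negative factors collapse to $\Gamma_{j-1}^{-}$; then, commuting $\Gamma_{j-1}^{-}\Gamma_{j+1}^{+}=\Gamma_{j+1}^{+}\Gamma_{j-1}^{-}$ by~(\ref{gamij}), the two positive factors collapse to $\Gamma_{i+1}^{+}$, leaving $\Gamma_{i+1}^{+}\Gamma_{j-1}^{-}$. The delicate points are (i) confirming the index ranges make~(\ref{gamij}) applicable, which reduces to the automatic inequality $j-1<(j+1)-1$, and (ii) checking the boundary cases $i=j$ and the extreme indices $i+1=n$ or $j-1=-1$; these behave correctly because $R_{-1}=R_{n}=\Gamma_{\Phi}$ is absorbed into every $\Gamma^{\pm}$ factor. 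Granting the identity, (b) reads $\chi\in\Gamma_{i+1}^{+}\Gamma_{j-1}^{-}=\Gamma_{i}\Gamma_{j}$, closing the cycle of equivalences.
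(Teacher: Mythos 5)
Your proof is correct and follows essentially the same route as the paper: the reduction of (a) via Lemma~\ref{inclem} and the stabilizer identification $\Gamma_{i}=\Gamma_{\{F_i\}}$, the trivial coset reformulation of (c), and the product identity $\Gamma_{i}\Gamma_{j}=\Gamma_{i+1}^{+}\Gamma_{j-1}^{-}$ computed from (\ref{gami}) and (\ref{gamij}) are exactly the ingredients of the paper's cyclic argument (a)~$\Rightarrow$~(c)~$\Rightarrow$~(b)~$\Rightarrow$~(a), which carries out the identical coset computation in display~(\ref{gami+1j-1}). Your only deviation is organizational --- showing each condition equivalent to the single statement $\varphi\psi^{-1}\in\Gamma_{i}\Gamma_{j}$ rather than chaining the three implications --- which changes nothing of substance.
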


\begin{proof}
We shall prove the equivalence in the form (a) $\Rightarrow$ (c) $\Rightarrow$ (b) $\Rightarrow$ (a).

Assume that (a) holds. Then $F_{i}\varphi\psi^{-1} \leq F_{j}$, and thus $F_{i}\varphi\psi^{-1}=F_{i}\gamma$ for some $\gamma\in\Gamma_j$ by Lemma~\ref{inclem}.  In turn, this says that $\Gamma_{i}\gamma\cap \Gamma_{j} \neq \emptyset$, since $\gamma$ lies in this intersection. But $(\varphi\psi^{-1})\gamma^{-1}\in\Gamma_{\{F_i\}}=\Gamma_i$, so $\Gamma_{i}\varphi\psi^{-1}=\Gamma_{i}\gamma$. Hence $\Gamma_{i}\varphi \cap \Gamma_{j}\psi \neq \emptyset$. Thus (c) holds.

If (c) holds, the commuting properties of Lemma~\ref{commu} show that
\begin{equation}
\label{gami+1j-1}
\begin{array}{lll}
\varphi\psi^{-1} \in \Gamma_{i}\Gamma_{j} & = & 
\Gamma_{i+1}^{+}\Gamma_{i-1}^{-}\Gamma_{j-1}^{-}\Gamma_{j+1}^{+} \\
& = & \Gamma_{i+1}^{+}\Gamma_{j-1}^{-}\Gamma_{j+1}^{+} \\
& = & \Gamma_{i+1}^{+}\Gamma_{j+1}^{+}\Gamma_{j-1}^{-} \\
& = & \Gamma_{i+1}^{+}\Gamma_{j-1}^{-}, 
\end{array}
\end{equation}
as required for (b).

Finally, suppose (b) holds.  Then $\varphi\psi^{-1} = \alpha\beta$ for some $\alpha \in \Gamma_{i+1}^{+}$ and
$\beta \in \Gamma_{j-1}^{-}$.  We deduce that
\[  F_{i}\varphi\psi^{-1} = F_{i}\alpha\beta = F_{i}\beta \leq F_{j}\beta = F_{j},\]
so that $F_{i}\varphi \leq F_{j}\psi$, which is (a).  This completes the proof.
\end{proof}

The previous lemma has important consequences. In effect, it says that we may identify a face $F_{i}\varphi$ of   a regular complex $\K$ with the right coset $\Gamma_{i}\varphi$ of the stabilizer 
$\Gamma_{i} = \Gamma_{\{F_i\}} = \langle R_k\mid k\neq i\rangle$ of the base $i$-face $F_{i}$ in $\Gamma$.
\smallskip

We conclude this section with a remark about the flag stabilizers of arbitrary regular complexes. A priori only little can be said about their structure. However, there are bounds on the prime divisors of the group order. For a regular complex with a finite flag stabilizer $R_{-1}=\Gamma_\Phi$, the prime divisors of the order of $R_{-1}$ are bounded by 
\[{\rm max}(k_{i}\!-\!1\mid 0\leq i\leq n-1).\]
In fact, an element of $R_{-1}$ of prime order exceeding this number would necessarily have to fix all adjacent flags of a flag that it fixes. But a simple flag connectivity argument shows that in a regular complex only the trivial automorphism can have this property.

\section{Regular complexes from groups}
\label{comfgroups} 

In the previous section we derived various properties of flag-transitive subgroups of the automorphism groups of regular complexes. In particular, in Lemma~\ref{incchar}, we proved that the combinatorial structure of a regular $n$-complex $\K$ can be completely described in terms of the distinguished generating subgroups $R_{-1},R_{0},\ldots,R_{n}$ of any flag-transitive subgroup $\Gamma$ of~$\Gamma(\K)$. 

If $\K$ is a regular $n$-polytope, then $\Gamma(\K)$ is simply flag-transitive and hence has no proper flag-transitive subgroup.  In this case $\Gamma=\Gamma(\K)$, the base flag stabilizer $R_{-1}=R_{n}=\Gamma_\Phi$ is trivial, and each subgroup $R_i$ (with $i=0,\ldots,n-1$) is generated by an involutory automorphism $\rho_i$ which maps the base flag $\Phi$ to its unique $i$-adjacent flag. The group of $\K$ is then what is called a {\em string C-group\/} (see \cite[Ch. 2E]{arp}), that is, the {\it distinguished generators\/} $\rho_0, \ldots, \rho_{n-1}$ satisfy both the commutativity relations typical of a Coxeter group with a string diagram, and the intersection property of Lemma~\ref{intprop}, which now takes the form
\begin{equation}
\label{polyintprop}
\langle\rho_{i}\,|\,i\in I\rangle \cap \langle\rho_{i}\,|\,i\in J\rangle 
= \langle\rho_{i}\,|\,i\in I\cap J\rangle 
\qquad (I,J\subseteq \{0,\ldots,n-1\}).
\end{equation}

In this section we characterize the groups that can occur as flag-transitive subgroups of the automorphism group of a regular complex as what we will call here generalized string C-groups (with trivial core).  As one of the most important consequences of this approach, we may think of regular complexes and corresponding generalized string C-groups as being essentially the same objects. We follow \cite[Sect. 3]{kom2} (and \cite{esdiss}).

Let $\Gamma$ be a group generated by subgroups $R_{-1},R_{0},\ldots,R_{n}$, where $R_{-1}$ and $R_n$ are proper subgroups of $R_i$ for each $i=0,\ldots,n-1$, and $R_{n}=R_{-1}$; we usually assume that $n \geq 1$. These subgroups are the {\em distinguished generating subgroups\/} of $\Gamma$, and along with $\Gamma$ will be kept  fixed during this section.  As before we set $N := \{-1,0,\ldots,n\}$. Further, the subgroups $\Gamma_{I} := \langle R_i \mid i\in I\rangle$ with $I \subseteq N$ are called the {\em distinguished subgroups\/} of $\Gamma$; here $\Gamma_{\emptyset} = R_{-1}$. Then $\Gamma$ is called a {\em generalized C-group\/} if $\Gamma$ has the following {\em intersection property (with respect to its distinguished generating subgroups)\/}: 
\begin{equation}
\label{cgr}
\Gamma_{I} \cap \Gamma_{J} = \Gamma_{I \cap J} \quad (I,J\subseteq N).
\end{equation}
It is immediate from the definition that the distinguished subgroups $\Gamma_I$ are themselves
generalized C-groups, with distinguished generating subgroups those $R_{i}$ with $i \in I\cup \{-1,n\}$.

It also follows from the definition that, in a generalized C-group $\Gamma$, the subgroups $\Gamma_I$ with $I\subseteq \{0,\ldots,n-1\}$ are pairwise distinct.  To see this, first observe that by (\ref{cgr}) and our assumption that $R_{-1}\,(=\Gamma_\emptyset)$ be a proper subgroup of $R_i$ for each $i=0,\ldots,n-1$, a group $R_{i}$ cannot be a subgroup of a group $\Gamma_{I}$ when $i \notin I\cup \{-1,n\}$. Consequently, if $I,J \subseteq \{0,\ldots,n-1\}$ and $\Gamma_{I} = \Gamma_{J}$, then (\ref{cgr}) implies that $\Gamma_{I} = \Gamma_{I \cap J} = \Gamma_J$; hence it follows from what was said before that $I = I \cap J = J$, as required.

A generalized C-group $\Gamma$ is called a {\em generalized string C-group\/} (strictly speaking, a {\it string generalized C-group\/}) if its generating subgroups satisfy 
\begin{equation}
\label{scgr}
R_{i}R_{j}=R_{j}R_{i}  \qquad (-1\leq i <j-1 \leq n-1).
\end{equation}

Thus, for the remainder of this section we assume that $\Gamma = \langle R_{-1},R_{0},\ldots,R_{n}\rangle$, with $n \geq 1$, is a generalized string C-group. 

As in the previous section, for each $i\in N$ we write 
\begin{equation}
\begin{array}{lllll}
\Gamma_{i}   &:=& \langle R_j \mid j\neq i\rangle, \\[.03in]
\Gamma_{i}^{-} &:=& \langle R_j \mid j\leq i\rangle, \\[.03in]
\Gamma_{i}^{+}&:= & \langle R_j \mid j\geq i\rangle, 
\end{array} 
\end{equation}
so in particular, $\Gamma_{-1}=\Gamma_{n}=\Gamma$. Then (\ref{gamij}) and (\ref{gami}) carry over, as before by the commuting properties (\ref{scgr}). Observe that the subgroups $\Gamma_{0},\ldots,\Gamma_{n-1}$ are mutually distinct, and distinct from $\Gamma$.

We now construct a regular $n$-incidence complex $\K$ from $\Gamma$. For $i \in N$, we take as the set of $i$-faces of $\K$ (that is, its faces of rank $i$) the set of all right cosets $\Gamma_{i}\varphi$ in $\Gamma$, with $\varphi \in\Gamma$.  As improper faces of $\K$, we choose two copies of $\Gamma$, one denoted
by $\Gamma_{-1}$, and the other by $\Gamma_{n}$;  in this context, they are regarded
as distinct. Then, for the right cosets of $\Gamma_{-1}$ and $\Gamma_{n}$, we have $\Gamma_{-1}\varphi = \Gamma_{-1}$ and $\Gamma_{n}\varphi = \Gamma_{n}$ for all $\varphi \in \Gamma$.  On (the set of all proper and improper faces of) $\K$, we define the following partial order:
\begin{equation}
\label{incdef}
\Gamma_{i}\varphi \leq \Gamma_{j}\psi\; :\Longleftrightarrow\; -1 \leq i \leq j \leq n,\ \varphi\psi^{-1} \in
\Gamma_{i+1}^{+}\Gamma_{j-1}^{-}.
\end{equation}
Then $\Gamma$ acts on $\K$ in an obvious way as a group of order preserving automorphisms.

Alternatively the partial order on $\K$ can be defined by 
\begin{equation}
\label{coxcompinc}
\Gamma_{i}\varphi \leq \Gamma_{j}\psi\; :\Longleftrightarrow\; 
-1 \leq i \leq j \leq n,\ \Gamma_{i}\varphi \cap\Gamma_{j}\psi \neq \emptyset.
\end{equation}
The equivalence of the two definitions is based on the commutation rules of (\ref{scgr}). In fact, it follows as in equation (\ref{gami+1j-1}) that $\Gamma_{i+1}^{+}\Gamma_{j-1}^{-}=\Gamma_{i}\Gamma_{j}$, so 
$\varphi\psi^{-1} \in\Gamma_{i+1}^{+}\Gamma_{j-1}^{-}=\Gamma_{i}\Gamma_{j}$ if and only of 
$\Gamma_{i}\varphi \cap \Gamma_{j}\psi\neq\emptyset$.

If the dependence of $\K$ on $\Gamma$ and $R_{-1},R_{0},\ldots,R_{n}$ is to be emphasized, we write $\K(\Gamma)$ or $\K(\Gamma;R_{-1},R_{0},\ldots,R_{n})$ for $\K$.  

We first show that the condition (\ref{incdef}) induces a partial order on $\K$. For reflexivity and antisymmetry of $\leq$ we can appeal to (\ref{coxcompinc}).  Certainly, a coset $\Gamma_{i}\varphi$ is incident with itself, which is reflexivity.  If $\Gamma_{i}\varphi$ and $\Gamma_{j}\psi$ are two cosets with $\Gamma_{i}\varphi\leq\Gamma_{j}\psi$ and $\Gamma_{j}\psi\leq\Gamma_{i}\varphi$, then $i=j$ and the cosets (for the same subgroup) must coincide as they intersect; this implies antisymmetry.  Finally, if $-1 \leq i \leq j \leq k \leq n$, we have
\begin{equation}
\label{trans}
\Gamma_{j+1}^{+}\Gamma_{i-1}^{-} \cdot \Gamma_{i+1}^{+}\Gamma_{k-1}^{-}
= \Gamma_{j+1}^{+}\Gamma_{i+1}^{+}\Gamma_{i-1}^{-}\Gamma_{k-1}^{-}
= \Gamma_{i+1}^{+}\Gamma_{k-1}^{-}.
\end{equation}
Transitivity of $\leq$ then is an immediate consequence if we appeal to the original definition of $\leq$ in \eqref{incdef}.  Thus $\leq$ is a partial order.

Clearly, $\Phi := \{\Gamma_{-1},\Gamma_{0},\ldots,\Gamma_{n-1},\Gamma_n\}$ is a flag of $\K$, which we naturally call the {\em base\/} flag; its faces are also called the {\em base\/} faces of $\K$. Since $\Phi$ is a flag, so is its image 
$\Phi\varphi =\{\Gamma_{-1}\varphi,\Gamma_{0}\varphi,\ldots,\Gamma_{n-1}\varphi,\Gamma_n\varphi\}$
for each $\varphi \in \Gamma$. 

We next establish that $\Gamma$ acts transitively on all chains of $\K$ of each given type $I \subseteq N$. When $I=N$ this shows that $\Gamma$ acts transitively on the flags of $\K$. Now let $I\subseteq N$, and let 
$\{\Gamma_{i}\varphi_{i} \mid i \in I\}$ be a chain of type $I$.  We proceed by induction.  Suppose that, for some $k \in I$, we have already shown that there exists an element $\psi \in \Gamma$ such that $\Gamma_{i}\varphi_{i} = \Gamma_{i}\psi$ for each $i\in I$ with $i \geq k$.  Let $j \in I$ be the next smaller number than $k$ (assuming that there is one).  Then $\Gamma_{j}\varphi_{j} \leq \Gamma_{k}\psi$ implies by (\ref{incdef}) that
$\varphi_{j}\psi^{-1} \in \Gamma_{k+1}^{+}\Gamma_{j-1}^{-}$, say $\varphi_{j}\psi^{-1} = \alpha\beta$, with
$\alpha \in \Gamma_{k+1}^{+}$ and $\beta \in\Gamma_{j-1}^{-}$.  It follows that $\alpha^{-1}\varphi_{j} =
\beta\psi =: \chi$, say, and hence that
\[\begin{array}{lll}
\Gamma_{i}\chi =  \Gamma_{i}\beta\psi = \Gamma_{i}\psi, \; \mbox{for } i\in I,i \geq k,\\[.02in]
\Gamma_{j}\chi = \Gamma_{j}\alpha^{-1}\varphi_{j} = \Gamma_{j}\varphi_{j},
\end{array} \]
giving the same property with $j$ instead of $k$ (and $\psi$ replaced by $\chi$). This is the inductive step, and the transitivity follows.

If $I\subseteq N$ and $\Phi_I$ denotes the subchain of $\Phi$ of type $I$ (consisting of the faces in $\Phi$ with ranks in $I$), then the stabilizer of $\Phi_{I}$ in $\Gamma$ is the subgroup $\Gamma_{\bar{I}}$. In particular, 
the stabilizer of the base flag $\Phi$ itself is $R_{-1}$. In fact, an element $\varphi\in\Gamma$ stabilizes $\Phi_{I}$ if and only if $\Gamma_{i}\varphi = \Gamma_{i}$ for each $i \in I$.  Equivalently, $\Phi_{I}\varphi=\Phi_I$ if and only if 
\[  \varphi \,\in\, \bigcap_{i \in I} \,\Gamma_{i}  \,=\,\bigcap_{i \in I}\, \langle R_{j}
\mid j \neq i\rangle \,=\, \Gamma_{\bar{I}},  \]
by the intersection property (\ref{cgr}) for $\Gamma$.  Thus the stabilizer of $\Phi_I$ is $\Gamma_{\bar{I}}$.

We can now state the following theorem.
\begin{theorem}
\label{pofa}
Let $n \geq 1$, and let $\Gamma = \langle R_{-1},R_{0},\ldots,R_{n}\rangle$ be a generalized string C-group and $\K := \K(\Gamma)$ the corresponding partially ordered set. Then $\K$ is a regular $n$-complex on which $\Gamma$ acts flag-transitively. In particular, $\K$ is finite, if $\Gamma$ is finite.
\end{theorem}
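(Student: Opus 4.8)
The plan is to verify the defining properties (I1)--(I4) for the poset $\K=\K(\Gamma)$, most of the preparatory work already being in place: the relation~(\ref{incdef}) has been shown to be a partial order, $\Phi=\{\Gamma_{-1},\ldots,\Gamma_n\}$ is a flag, $\Gamma$ acts transitively on the chains of each type~$I$, and the stabilizer of $\Phi_I$ is $\Gamma_{\bar I}$. In particular the case $I=N$ already gives the asserted flag-transitivity, so only (I1)--(I4) remain. For (I1), observe that $\Gamma_{-1}$ and $\Gamma_n$ have a single coset each and are the only faces of ranks $-1$ and $n$. Since $R_{-1}=R_n$ lies in every $R_i$, one has $\Gamma_0^{+}=\Gamma_{n-1}^{-}=\Gamma$, so by~(\ref{incdef}) the inequalities $\Gamma_{-1}\leq\Gamma_j\psi$ and $\Gamma_i\varphi\leq\Gamma_n$ hold for all $\varphi,\psi$; hence $\Gamma_{-1}$ and $\Gamma_n$ are the least and greatest faces. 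For (I2), the subgroups $\Gamma_0,\ldots,\Gamma_{n-1}$ being mutually distinct makes the rank well defined, so each translate $\Phi\varphi$ is a flag with exactly $n+2$ faces; and given any chain $C$, of type $I$ say, the chain-transitivity yields $\varphi$ with $C=\Phi_I\varphi\subseteq\Phi\varphi$, placing $C$ in a flag of $n+2$ elements.

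For (I4) I would first reduce, using transitivity on chains of type $\{i-1,i+1\}$, to counting the $i$-faces $\Gamma_i\chi$ with $\Gamma_{i-1}\leq\Gamma_i\chi\leq\Gamma_{i+1}$. If $\chi\in R_i$ then $\chi\in\Gamma_{i-1}\cap\Gamma_{i+1}$, which via~(\ref{coxcompinc}) gives both inequalities. Conversely, applying chain-transitivity to $\{\Gamma_{i-1},\Gamma_i\chi,\Gamma_{i+1}\}$ (type $\{i-1,i,i+1\}$) produces $\gamma\in\Gamma_{i-1}\cap\Gamma_{i+1}$ with $\Gamma_i\chi=\Gamma_i\gamma$. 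Now the intersection property~(\ref{cgr}) and the commuting relations~(\ref{scgr}) give $\Gamma_{i-1}\cap\Gamma_{i+1}=\langle R_j\mid j\neq i-1,i+1\rangle=R_i\,M$, where $M:=\langle R_j\mid j\leq i-2\text{ or }j\geq i+2\rangle\subseteq\Gamma_i$ commutes with $R_i$; absorbing the $M$-part on the left yields $\Gamma_i\gamma=\Gamma_i r$ with $r\in R_i$. Thus the intermediate faces are exactly the $\Gamma_i r$ with $r\in R_i$, and since $R_i\cap\Gamma_i=\Gamma_{\{i\}}\cap\Gamma_{N\setminus\{i\}}=R_{-1}$ by~(\ref{cgr}), they are in bijection with $R_{-1}\backslash R_i$. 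Their number is therefore $|R_i:R_{-1}|\geq 2$, the strict inequality coming from the hypothesis that $R_{-1}$ is a proper subgroup of $R_i$. This establishes (I4).

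The main obstacle is (I3), strong connectivity, which I would obtain in its flag form. Rather than inducting over sections, I would verify the flag criterion of Section~\ref{inccom}: any two flags can be joined by adjacent flags all containing their common faces. By flag-transitivity it suffices to join $\Phi$ to an arbitrary translate $\Phi\varphi$. Set $K:=\{i\mid\varphi\in\Gamma_i\}$; the intersection property~(\ref{cgr}) gives $\varphi\in\bigcap_{i\in K}\Gamma_i=\langle R_j\mid j\notin K\rangle$, so $\varphi=\sigma_1\cdots\sigma_m$ with each $\sigma_t\in R_{i_t}$ and $i_t\notin K$. The telescoping sequence $\Phi,\ \Phi\sigma_m,\ \Phi\sigma_{m-1}\sigma_m,\ \ldots,\ \Phi\sigma_1\cdots\sigma_m=\Phi\varphi$ has consecutive terms differing by right multiplication by a single $\sigma_t^{-1}\in R_{i_t}$; since (again by~(\ref{cgr})) two flags $\Phi\alpha,\Phi\beta$ agree off the $i$-face exactly when $\alpha\beta^{-1}\in R_i$, each step is an $i_t$-adjacency or an equality. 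Moreover every $\sigma_t$ fixes each base face of rank in $K$, because $R_{i_t}\subseteq\Gamma_i$ for $i\neq i_t$, so all intermediate flags contain $\Phi\cap\Phi\varphi$. Transporting this path by a suitable element of $\Gamma$ handles an arbitrary pair of flags, which gives strong flag-connectivity, hence (I3').

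This verifies (I1)--(I4), so $\K$ is a regular $n$-complex, and the transitivity noted at the outset shows $\Gamma$ acts flag-transitively on it. Finally, finiteness of $\K$ when $\Gamma$ is finite is immediate, since the $i$-faces are the right cosets of $\Gamma_i$, numbering $|\Gamma:\Gamma_i|\leq|\Gamma|$, so $\K$ has at most $(n+2)|\Gamma|$ faces in all.
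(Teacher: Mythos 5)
Your proposal is correct and follows essentially the same route as the paper: verify (I1)--(I4) using the already-established chain-transitivity and stabilizer computations, prove (I3) in the strong flag-connectedness form via the telescoping sequence $\Phi,\Phi\sigma_m,\Phi\sigma_{m-1}\sigma_m,\ldots,\Phi\varphi$ of successively adjacent flags all containing $\Phi_J$. The only cosmetic difference is in (I4), where you identify the intermediate $i$-faces as the cosets $\Gamma_i r$ with $r\in R_i$ by a direct coset computation, whereas the paper counts the flags containing $\Phi_{N\setminus\{i\}}$ via orbit--stabilizer; both yield the count $|R_i:R_{-1}|\geq 2$.
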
 

\begin{proof}
For $\K$ we need to check the defining properties (I1), \ldots, (I4) of incidence complexes.  The property (I1) is trivially satisfied with $\Gamma_{-1}$ and $\Gamma_{n}$ as the least and greatest face, respectively.  In fact, by (\ref{incdef}), $\Gamma_{-1} \leq \Gamma_{i}\varphi \leq\Gamma_{n}$ for all $\varphi$ and all $i$.

Next, we exploit the fact that every chain $\Omega$ in $\K$ of type $I$ can be expressed in the form 
$\Omega=\Phi_{I}\varphi$, for some $\varphi\in \Gamma$. In particular, $\Omega$ is contained in the flag $\Phi\varphi$, which gives (I2).

We then prove (I4). Now if we take $I = N \setminus\{i\}$ for any $i \in \{0,\ldots,n-1\}$, we see that the stabilizer of $\Phi_{N \setminus\{i\}} = \{\Gamma_{-1},\Gamma_{0},\ldots,\Gamma_{i-1},\Gamma_{i+1},\ldots,\Gamma_{n}\}$ is $\Gamma_{\{i\}} = R_{i}$. On the other hand, the stabilizer of $\Phi$ itself is $\Gamma_{\emptyset}=R_{-1}$, which by assumption is a proper subgroup of $R_{i}$. Hence the number of flags of $\K$ containing 
$\Phi_{N \setminus \{i\}}$, which is given by $|R_{i}:R_{-1}|$, is at least~$2$. The transitivity of $\Gamma$ on 
chains of type $N \setminus\{i\}$ then gives (I4). Thus, for each flag, the number of $i$-adjacent flags is at least $1$ and is given by $(|R_{i}:R_{-1}|-1)$ for each $i=0,\ldots,n-1$.

Finally, we demonstrate (I3), in the alternative form (I3') of strong flag-connectedness. As $\Gamma$ acts flag-transitively on $\K$, to prove (I3'), it suffices to consider the special case where one flag is the base flag $\Phi$.  If $\Psi$ is another flag of $\K$, let $J\subseteq N$ be such that $\Phi \cap \Psi = \Phi_{J}$. 
Since $\Phi_{J} \subseteq \Psi$ and the stabilizer of $\Phi_{J}$ is $\Gamma_{\bar{J}}$, the flag-transitivity of $\Gamma$ shows that $\Psi = \Phi\varphi$ for some $\varphi \in\Gamma_{\bar{J}}$. Suppose 
$\varphi = \varphi_{1}\ldots\varphi_{k}$ such that $\varphi_{l}\in R_{j_l}$, for some $j_{l}\in \overline{J}$, for $l=1,\ldots,k$. Define $\psi_{l} := \varphi_{l}\ldots \varphi_{k}$ for $l=1,\ldots,k$. Then 
\[ \Phi, \Phi \psi_{k},\Phi \psi_{k-1},\ldots, \Phi \psi_{2},
\Phi \psi_{1}=\Phi\varphi=\Psi ,\]
is a sequence of successively adjacent flags, all containing $\Phi_{J}$, which connects $\Phi$ and $\Psi$. Note here that $\Phi\psi_{l+1}$ and $\Phi\psi_{l}$ are $j_l$-adjacent for each $l=1,\ldots,k-1$, since $\Phi$ and $\Phi\varphi_l$ are $j_{l}$-adjacent and so are $\Phi\psi_{l+1}$ and $\Phi\varphi_l\psi_{l+1}=\Phi\psi_{l}$. Thus $\K$ is strongly connected, and the proof of the theorem is complete.
\end{proof}

Note that the action of $\Gamma$ on $\K:=\K(\Gamma)$ need not be faithful in general. The kernel of the action consists of the elements of $\Gamma$ which act trivially on $\K$, or equivalently, on the set of flags of~$\K$. The stabilizer of the base flag $\Phi$ in $\Gamma$ is $R_{-1}$, and hence the stabilizer of a flag $\Phi\varphi$ with $\varphi\in\Gamma$ is $\varphi^{-1}R_{-1}\varphi$. Hence the kernel of the action of $\Gamma$ of $\K$ is given by its subgroup
\begin{equation}
\label{core}
{\rm core}(R_{-1}) \,=\, \bigcap_{\varphi\in\Gamma}\, (\varphi^{-1}R_{-1}\varphi). 
\end{equation}
Recall that in a group $B$, the {\it core\/} of a subgroup $A$, denoted ${\rm core}(A)$, is the largest normal subgroup of $B$ contained in $A$; that is, ${\rm core}(A) = \cap_{b\in B}b^{-1}Ab$ (see~\cite{asch}). Clearly, $\Gamma$ itself can be identified with a flag-transitive subgroup of the automorphism group of $\K(\Gamma)$ if and only if ${\rm core}(R_{-1})$ is trivial. 

Our next theorem describes the structure of the sections of the regular complex $\K(\Gamma)$. For the proof we require the following consequence of the intersection property (\ref{cgr}):
\begin{equation}
\label{isomsec}
\Gamma_{k+1}^{+}\Gamma_{l-1}^{-} \,\cap\, \Gamma_{\{i+1,\ldots,j-1\}}
\,=\, \Gamma_{\{k+1,\ldots,j-1\}}\Gamma_{\{i+1,\ldots,l-1\}}\quad (i\leq k\leq l \leq j) 
\end{equation}
To prove this property, suppose $\varphi$ is an element in the set on the left hand side, $\varphi=\alpha\beta$ (say), with
$\alpha \in \Gamma_{k+1}^{+}$ and $\beta \in\Gamma_{l-1}^{-}$. Now apply (\ref{cgr}) twice, bearing in mind that $i\leq k\leq l \leq j$:\ first, with $I=\{-1,0,\ldots,l-1\}$ and $J=\{i+1,\ldots,n\}$ to obtain
\[ \beta =\alpha^{-1}\varphi\in \Gamma_{l-1}^{-} \cap \Gamma_{i+1}^{+} 
= \Gamma_{\{i+1,\ldots,l-1\}}, \]
and second, with $I=\{k+1,\ldots,n\}$ and $J=\{-1,0,\ldots,j-1\}$ to obtain
\[ \alpha = \varphi\beta^{-1} \in \Gamma_{k+1}^{+} \cap \Gamma_{j-1}^{-}
= \Gamma_{\{k+1,\ldots,j-1\}}. \]
Thus $\varphi\in\Gamma_{\{k+1,\ldots,j-1\}}\Gamma_{\{i+1,\ldots,l-1\}}$, as required. The opposite inclusion is clear, since
the two groups $\Gamma_{\{k+1,\ldots,j-1\}}$ and $\Gamma_{\{i+1,\ldots,l-1\}}$ both lie in $\Gamma_{\{i+1,\ldots,j-1\}}$, and are subgroups of $\Gamma_{k+1}^{+}$ and $\Gamma_{l-1}^{-}$ respectively. 

\begin{theorem}
\label{cgrpolprop}
Let $\K:=\K(\Gamma)$ be the regular $n$-complex associated with the generalized string C-group $\Gamma
= \langle R_{-1},R_{0},\ldots,R_{n}\rangle$.\\[.03in]
(a)\ Let $-1 \leq i< j-1 \leq n-1$, and let $F$ be an $i$-face and $G$ a $j$-face of $\K$ with $F \leq G$. Then the section $G/F$ of $\K$ is isomorphic to 
\[ \K(\Gamma_{\{i+1,\ldots,j-1\}})=\K(\Gamma_{\{i+1,\ldots,j-1\}};R_{-1},R_{i+1},\ldots,R_{j-1},R_{n}). \]
(b)\ The facets and vertex-figures of $\K$ are isomorphic to the regular $(n-1)$-complexes $\K(\Gamma_{n-1})$ and $\K(\Gamma_0)$, respectively.\\[.03in]
(c)\ Let $-1 \leq i \leq n-1$, and let $F$ be an $(i-1)$-face and $G$ an $(i+1)$-face of $\K$ with $F \leq G$. Then the number of $i$-faces of $\K$ in $G/F$ is $|R_{i}:R_{-1}|$.
\end{theorem}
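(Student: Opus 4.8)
The plan is to establish (a) in detail, derive (b) from it as a special case, and treat (c)---which is the boundary case $j=i+1$ not covered by (a)---by a direct count. Throughout I invoke Theorem~\ref{pofa}, which tells us that $\K:=\K(\Gamma)$ is a regular $n$-complex on which $\Gamma$ acts flag-transitively; consequently every structural fact derived in Section~\ref{groupfcom} is available for $\K$. For (a) I first reduce to the base faces: given the $i$-face $F$ and $j$-face $G$ with $F\leq G$, I pick a flag $\Psi$ of $\K$ containing both and use flag-transitivity to find $\varphi\in\Gamma$ with $\Psi\varphi=\Phi$; this $\varphi$ restricts to an isomorphism $G/F\to F_j/F_i$, so it suffices to treat $F=F_i=\Gamma_i$ and $G=F_j=\Gamma_j$. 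Writing $\Lambda:=\Gamma_{\{i+1,\ldots,j-1\}}$, whose distinguished subgroups are $\Lambda_r=\Gamma_{\{i+1,\ldots,j-1\}\setminus\{r\}}$, $\Lambda_{r+1}^{+}=\Gamma_{\{r+1,\ldots,j-1\}}$ and $\Lambda_{s-1}^{-}=\Gamma_{\{i+1,\ldots,s-1\}}$, I define a rank-preserving map $f\colon\K(\Lambda)\to F_j/F_i$ sending a proper $r$-face $\Lambda_r\lambda$ to $\Gamma_r\lambda$ and the two improper faces of $\K(\Lambda)$ to $F_i$ and $F_j$, and aim to show $f$ is an isomorphism.

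Four of the required checks are routine. Because $\Lambda\subseteq\Gamma_{i+1}^{+}\cap\Gamma_{j-1}^{-}$, each $\Gamma_r\lambda$ with $\lambda\in\Lambda$ satisfies $\Gamma_i\leq\Gamma_r\lambda\leq\Gamma_j$ by the definition (\ref{incdef}), so $f$ really lands in the section. Well-definedness and injectivity both come from the intersection property (\ref{cgr}): one computes $\Gamma_r\cap\Lambda=\Gamma_{(N\setminus\{r\})\cap\{i+1,\ldots,j-1\}}=\Lambda_r$, whence for $\lambda,\lambda'\in\Lambda$ one has $\Gamma_r\lambda=\Gamma_r\lambda'$ precisely when $\Lambda_r\lambda=\Lambda_r\lambda'$. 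Surjectivity uses the observation from Section~\ref{groupfcom} that $\Lambda=\langle R_{i+1},\ldots,R_{j-1}\rangle$ acts flag-transitively on $F_j/F_i$: every proper face of the section lies in some flag of the section, every such flag is $\Phi_{\{i,\ldots,j\}}\lambda$ for some $\lambda\in\Lambda$ (using $\Lambda\subseteq\Gamma_i\cap\Gamma_j$), and hence every $r$-face of the section has the form $\Gamma_r\lambda$ with $\lambda\in\Lambda$.

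The substantive step is that $f$ preserves incidence in both directions. Comparing the orders on $\K(\Lambda)$ and on $\K$ through (\ref{incdef}), I must show for $\lambda,\mu\in\Lambda$ and $i<r\leq s<j$ that
\[
\lambda\mu^{-1}\in\Gamma_{\{r+1,\ldots,j-1\}}\Gamma_{\{i+1,\ldots,s-1\}}
\iff
\lambda\mu^{-1}\in\Gamma_{r+1}^{+}\Gamma_{s-1}^{-}.
\]
The forward implication is immediate from $\Gamma_{\{r+1,\ldots,j-1\}}\subseteq\Gamma_{r+1}^{+}$ and $\Gamma_{\{i+1,\ldots,s-1\}}\subseteq\Gamma_{s-1}^{-}$. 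The reverse implication is the crux, and it is precisely what the auxiliary identity (\ref{isomsec}) was prepared for: since $\lambda\mu^{-1}\in\Lambda=\Gamma_{\{i+1,\ldots,j-1\}}$, applying (\ref{isomsec}) with $k=r$ and $l=s$ (legitimate because $i\leq r\leq s\leq j$) gives $\Gamma_{r+1}^{+}\Gamma_{s-1}^{-}\cap\Gamma_{\{i+1,\ldots,j-1\}}=\Gamma_{\{r+1,\ldots,j-1\}}\Gamma_{\{i+1,\ldots,s-1\}}$, which places $\lambda\mu^{-1}$ in the required product. I expect this to be the main obstacle; everything else is bookkeeping with the intersection property. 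Once incidence is established, $f$ is an order-isomorphism and (a) holds.

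Part (b) then follows by specialization. Taking $(i,j)=(-1,n-1)$ in (a) identifies the base facet $F_{n-1}/F_{-1}$ with $\K(\Gamma_{\{0,\ldots,n-2\}})=\K(\Gamma_{n-1})$, and $(i,j)=(0,n)$ identifies the base vertex-figure $F_n/F_0$ with $\K(\Gamma_{\{1,\ldots,n-1\}})=\K(\Gamma_0)$ (both admissible for $n\geq 2$); Lemma~\ref{isomsect} extends this to all facets and vertex-figures. For (c) the section $G/F=F_{i+1}/F_{i-1}$ has rank $1$, so its $i$-faces are exactly its flags. After reducing to base faces, flag-transitivity of $\langle R_i\rangle=R_i$ on this section shows each $i$-face is $\Gamma_i\lambda$ with $\lambda\in R_i$, while $\Gamma_i\cap R_i=\Gamma_{(N\setminus\{i\})\cap\{i\}}=R_{-1}$ gives $\Gamma_i\lambda=\Gamma_i\lambda'$ iff $R_{-1}\lambda=R_{-1}\lambda'$; hence the $i$-faces biject with the right cosets of $R_{-1}$ in $R_i$, and their number is $|R_i:R_{-1}|$.
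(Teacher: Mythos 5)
Your proposal is correct and follows essentially the same route as the paper: reduce to the base section by flag-transitivity, represent its faces as $\Gamma_r\lambda$ with $\lambda\in\Lambda=\Gamma_{\{i+1,\ldots,j-1\}}$ via the stabilizer of the complementary subchain of $\Phi$, and match the two partial orders using the identity (\ref{isomsec}) with $k=r$, $l=s$. The only differences are cosmetic: you make the well-definedness/injectivity check $\Gamma_r\cap\Lambda=\Lambda_r$ explicit where the paper leaves it implicit, and you recount part (c) directly via $\Gamma_i\cap R_i=R_{-1}$ instead of citing the flag count $|R_i:R_{-1}|$ already obtained in the proof of Theorem~\ref{pofa}.
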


\begin{proof}
We already established part (c). Part (b) is a special case of part (a). Now for part (a) assume that $-1 \leq i < j-1 \leq n-1$. The transitivity of $\Gamma$ on chains of type $\{i,j\}$ implies that it suffices to prove the result for the section $\K(i,j) := \Gamma_{j}/\Gamma_{i}$ of $\K$.  Let $I :=\{0,\ldots,i,j,\ldots,n-1\}$. There is a one-to-one correspondence between chains of $\K(i,j)$ and chains of $\K$ which contain $\Phi_{I}$. In particular, appealing again to the transitivity of $\Gamma$ on chains of a given type, we deduce that each face $\Gamma_{k}\varphi \in \K(i,j)$ (with $i\leq k \leq j$) admits a representation with $\varphi$ in the stabilizer of $\Phi_{I}$, namely $\Gamma_{\bar{I}}=\Gamma_{\{i+1,\ldots,j-1\}}$. It now follows from (\ref{isomsec}) that $\K(i,j)$ is isomorphic to $\K(\Gamma_{\{i+1,\ldots,j-1\}})$. Set $\Lambda:=\Gamma_{\{i+1,\ldots,j-1\}}$. In fact, by (\ref{isomsec}), if $\varphi,\psi\in\Gamma_{\{i+1,\ldots,j-1\}}=\Lambda$ and $i\leq k\leq l \leq j$, then $\varphi\psi^{-1}\in\Gamma_{k+1}^{+}\Gamma_{l-1}^{-}$ if and only if 
\[\varphi\psi^{-1}\,\in\,
\Gamma_{\{k+1,\ldots,j-1\}}\Gamma_{\{i+1,\ldots,l-1\}}
\,=\, \Lambda_{k+1}^{+}\,\Lambda_{l-1}^{-}, \]
or equivalently, $\Gamma_{k}\varphi\leq\Gamma_{l}\psi$ in $\K(i,j)$ (that is, in $\K$) if and only if 
\[\Lambda_{k}\,\varphi\leq\Lambda_{l}\,\psi\] 
in $\K(\Lambda)=\K(\Gamma_{\{i+1,\ldots,j-1\}})$. Thus $\K(i,j)$ and $\K(\Gamma_{\{i+1,\ldots,j-1\}})$ are isomorphic complexes, and the proof of the theorem is complete.
\end{proof}

There are two immediate consequences of the earlier results and the construction of $\K(\Gamma)$.

\begin{corollary}
\label{corgscg}
The generalized string C-groups with trivial core are precisely the flag-transitive subgroups of the automorphism groups of regular complexes.
\end{corollary}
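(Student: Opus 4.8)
The plan is to read the corollary as an equivalence and establish the two inclusions separately; by this point essentially all the machinery is in place, so the task is mainly to cite the correct earlier result for each half and to reconcile the two competing notions of \emph{distinguished generating subgroups}, one attached to a complex and one intrinsic to a group.

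First I would prove that every flag-transitive subgroup $\Gamma$ of the automorphism group $\Gamma(\K)$ of a regular $n$-complex $\K$ is a generalized string C-group with trivial core. After fixing a base flag $\Phi$ and forming $R_{-1},R_0,\dots,R_n$ as in (\ref{defri}), I note that $R_{-1}=\Gamma_\Phi=R_n$, and that (\ref{kis}) together with (I4) gives $|R_i:R_{-1}|=k_i\geq 2$, so $R_{-1}=R_n$ is a proper subgroup of each $R_i$ with $0\leq i\leq n-1$; this secures the standing hypotheses on the generating subgroups. Lemma~\ref{disting} yields $\Gamma=\langle R_{-1},\dots,R_n\rangle$, Lemma~\ref{intprop} is exactly the intersection property (\ref{cgr}), and Lemma~\ref{commu} is exactly the commuting relation (\ref{scgr}); hence $\Gamma$ is a generalized string C-group. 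Finally, since $\Gamma\leq\Gamma(\K)$ acts faithfully on $\K$, the kernel of its action on $\K$ is trivial, and as this kernel equals $\mathrm{core}(R_{-1})$ by (\ref{core}), the core is trivial.

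Next I would prove the reverse inclusion. Starting from a generalized string C-group $\Gamma=\langle R_{-1},R_0,\dots,R_n\rangle$ with trivial core, Theorem~\ref{pofa} already supplies a regular $n$-complex $\K(\Gamma)$ on which $\Gamma$ acts flag-transitively by order-preserving maps. The one point needing attention is faithfulness: the kernel of this action is again $\mathrm{core}(R_{-1})$ by (\ref{core}), and this is trivial by hypothesis, so $\Gamma$ embeds as a flag-transitive subgroup of $\Gamma(\K(\Gamma))$, as required.

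The step I expect to demand the most care is not either inclusion in isolation but the consistency of the terminology across them, namely that once $\Gamma$ is identified with its image in $\Gamma(\K(\Gamma))$, the intrinsic subgroups $R_{-1},R_0,\dots,R_n$ we began with agree with the geometric ones $\Gamma_{\Phi\setminus\{F_i\}}$ of (\ref{defri}) relative to the base flag $\Phi=\{\Gamma_{-1},\dots,\Gamma_n\}$ of $\K(\Gamma)$. This is precisely the content of the computation preceding Theorem~\ref{pofa}, where the stabilizer of $\Phi_I$ in $\Gamma$ is shown to be $\Gamma_{\bar{I}}$; taking $I=N$ gives $R_{-1}$ as the base flag stabilizer and taking $I=N\setminus\{i\}$ gives $R_i$. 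With this identification the assignments $\K\mapsto\Gamma$ and $\Gamma\mapsto\K(\Gamma)$ are mutually inverse through the coset description of faces in Lemma~\ref{incchar}, and the corollary follows.
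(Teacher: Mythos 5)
Your proposal is correct and follows essentially the same route as the paper: the forward direction assembles Lemmas~\ref{disting}, \ref{intprop}, and \ref{commu} (plus the properness of $R_{-1}$ in each $R_i$ from (I4) and faithfulness giving trivial core), and the converse is Theorem~\ref{pofa} together with the observation that trivial core means $\Gamma$ acts faithfully on $\K(\Gamma)$. The paper's own proof is just a terse pointer to these same earlier results, so your more detailed write-up (including the consistency check identifying the intrinsic $R_i$ with the base-flag stabilizers in $\K(\Gamma)$) is a faithful expansion of it.
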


\begin{proof}
Clearly, a flag-transitive subgroup of the automorphism group of a regular complex must have trivial core since only the identity automorphism fixes every face. This shows one direction. The converse was already addressed above.
\end{proof}

\begin{theorem} 
\label{pap}
Let $n \geq 1$, let $\K$ be a regular $n$-complex, let $\Gamma$ be a flag-transitive subgroup of $\Gamma(\K)$,
and let $R_{-1},R_{0},\ldots,R_{n}$ be the distinguished generating subgroups of $\Gamma$ associated with the base flag $\Phi = \{F_{-1},F_{0},\ldots,F_{n}\}$ of $\K$. Then the regular complexes $\K$ and $\K(\Gamma)$ (or more exactly, $\K(\Gamma;R_{-1},R_{0},\ldots,R_{n})$) are isomorphic.  In particular, the mapping 
$\K \to \K(\Gamma)$ given by
\[  F_{i}\varphi \rightarrow \Gamma_{i}\varphi \qquad  (-1 \leq i \leq n;\,\varphi \in \Gamma) \]
is an isomorphism.
\end{theorem}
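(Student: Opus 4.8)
The plan is to show that the explicit map $\Theta\colon \K\to\K(\Gamma)$, $F_i\varphi\mapsto\Gamma_i\varphi$, is a well-defined bijection that both preserves and reflects the partial order. Before this is even meaningful I would first record that $\Gamma$, equipped with $R_{-1},R_0,\ldots,R_n$, really is a generalized string C-group, so that $\K(\Gamma)$ is defined: $\Gamma=\langle R_{-1},\ldots,R_n\rangle$ by Lemma~\ref{disting}; we have $R_{-1}=\Gamma_\Phi=R_n$ by definition, and this is a proper subgroup of each $R_i$ because $|R_i:R_{-1}|=k_i\geq 2$ by (I4) together with \eqref{kis}; the intersection property \eqref{cgr} is Lemma~\ref{intprop}; and the string commutation \eqref{scgr} is Lemma~\ref{commu}. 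Hence $\K(\Gamma)$ exists and, by Theorem~\ref{pofa}, is a regular $n$-complex carrying a flag-transitive $\Gamma$-action.

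Next I would verify that $\Theta$ is well defined and bijective. By flag-transitivity of $\Gamma$ every $i$-face of $\K$ has the form $F_i\varphi$ for some $\varphi\in\Gamma$, so $\Theta$ is defined on all of $\K$, is visibly rank-preserving, and is surjective onto the cosets $\Gamma_i\varphi$. The crucial point is that the stabilizer of the base $i$-face $F_i$ in $\Gamma$ is exactly $\Gamma_i$: taking $\Omega=\{F_i\}$ in \eqref{stab2} (equivalently $I=N\setminus\{i\}$ in \eqref{stab1}, which rests on Lemma~\ref{regstab}) gives $\Gamma_{\{F_i\}}=\Gamma_{N\setminus\{i\}}=\Gamma_i$. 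Consequently $F_i\varphi=F_i\psi$ holds if and only if $\varphi\psi^{-1}$ fixes $F_i$, that is, $\varphi\psi^{-1}\in\Gamma_i$, that is, $\Gamma_i\varphi=\Gamma_i\psi$. This single equivalence simultaneously shows that $\Theta$ is well defined (equal faces map to equal cosets) and injective (distinct faces of a given rank map to distinct cosets); since $\Theta$ preserves rank, no two faces of different ranks can collide. Thus $\Theta$ is a rank-preserving bijection.

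The heart of the matter is that $\Theta$ preserves and reflects incidence, and this is in effect already done. For proper faces, with $0\leq i\leq j\leq n-1$, Lemma~\ref{incchar} states that $F_i\varphi\leq F_j\psi$ in $\K$ if and only if $\varphi\psi^{-1}\in\Gamma_{i+1}^{+}\Gamma_{j-1}^{-}$; but this last condition is precisely the definition \eqref{incdef} of $\Gamma_i\varphi\leq\Gamma_j\psi$ in $\K(\Gamma)$. Hence $F_i\varphi\leq F_j\psi$ if and only if $\Theta(F_i\varphi)\leq\Theta(F_j\psi)$ for all proper faces. The improper faces require only a separate trivial check: $F_{-1}$ and $F_n$ are the least and greatest faces of $\K$, their images $\Gamma_{-1}$ and $\Gamma_n$ are the least and greatest faces of $\K(\Gamma)$ (as verified via (I1) in Theorem~\ref{pofa}), and every comparison involving them holds on both sides. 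Combining the two directions, $\Theta$ is an order isomorphism, hence an isomorphism of incidence complexes.

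I expect no serious obstacle here: the theorem is essentially a repackaging of Lemma~\ref{incchar} together with the coset description of faces. The only points demanding care are bookkeeping ones — confirming that the hypotheses needed to form $\K(\Gamma)$ are met, reconciling the two descriptions of $\Gamma_i$ as both $\langle R_j\mid j\neq i\rangle$ and the stabilizer $\Gamma_{\{F_i\}}$, and treating the two improper faces explicitly, since Lemma~\ref{incchar} is stated only for ranks between $0$ and $n-1$.
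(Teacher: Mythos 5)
Your proof is correct and follows exactly the route the paper intends: the paper states Theorem~\ref{pap} without a written proof, presenting it as an ``immediate consequence'' of Lemma~\ref{incchar} (which it explicitly glosses as permitting the identification of $F_i\varphi$ with the coset $\Gamma_i\varphi$) together with the construction of $\K(\Gamma)$ in Theorem~\ref{pofa}. Your write-up simply makes the implicit argument explicit, and the points you single out for care --- verifying the generalized string C-group hypotheses, identifying $\Gamma_i$ with the stabilizer $\Gamma_{\{F_i\}}$ via Lemma~\ref{regstab}, and handling the improper faces outside the range of Lemma~\ref{incchar} --- are precisely the right ones.
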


\section{Regular polytopes and C-groups}
\label{regpols}

The basic structure results for abstract regular polytopes and their automorphism groups can be derived from the results of Sections~\ref{groupfcom} and \ref{comfgroups} (see \cite{esdiss,kom2} and \cite{arp}). Abstract polytopes are incidence complexes in which every flag has exactly one $i$-adjacent flag for each $i=0,\ldots,n-1$; that is, polytopes satisfy the diamond condition. Regular polytopes have a simply flag-transitive automorphism group, so all flag stabilizers are trivial and there are no proper flag-transitive subgroups.

Let $\K$ be a regular $n$-polytope, $\Phi=\{F_{-1},F_{0},\ldots,F_{n}\}$ be a base flag of $\K$, and let $\Gamma:=\Gamma(\K)$. Then $R_{i}=\langle \rho_i\rangle$ for each $i=0,\ldots,n-1$, where $\rho_i$ is the unique automorphism of $\K$ mapping $\Phi$ to its $i$-adjacent flag. The subgroups $R_{-1}$ and $R_n$ are trivial. Thus $\Gamma = \langle\rho_0,\ldots,\rho_{n-1}\rangle$. The involutions $\rho_{0},\ldots,\rho_{n-1}$ are called the {\it distinguished generators\/} of $\Gamma$. Then the structure results of Section~\ref{groupfcom} for the distinguished generating subgroups of $\Gamma$ translate directly into corresponding statements for the distinguished generators.

Conversely, let $\Gamma$ be a group generated by involutions $\rho_0,\ldots,\rho_{n-1}$, called the {\it distinguished generators\/} of $\Gamma$. Then $\Gamma$ is a group of the type discussed at the beginning of Section~\ref{comfgroups}, with $R_{i}:=\langle\rho_{i}\rangle$ for $i=0,\ldots,n-1$, and $R_{-1}=R_{n}=\{1\}$. In this case it suffices to consider the distinguished subgroups $\Gamma_{I} := \langle \rho_i \mid i\in I\rangle$ with $I \subseteq \{0,\ldots,n-1\}$. We call $\Gamma$ a {\em C-group\/} if $\Gamma$ has the intersection property (\ref{cgr}), that is, 
\begin{equation}
\label{cgrrho}
\langle\rho_{i}\,|\,i\in I\rangle \,\cap\, \langle\rho_{i}\,|\,i\in J\rangle
\,=\, \langle\rho_{i}\,|\,i\in I\cap J\rangle \qquad (I,J\subseteq \{0,\ldots,n-1\}).
\end{equation}
A C-group $\Gamma$ is called a {\em string C-group\/} if the distinguished generators also satisfy the relations
\begin{equation}
\label{scgrrho}
(\rho_{i}\rho_{j})^{2}=1  \qquad (-1\leq i <j-1 \leq n-1),
\end{equation}
which is equivalent to requiring (\ref{scgr}). The number of generators $n$ is called the {\it C-rank\/}, or simply the {\it rank\/}, of $\Gamma$. Clearly, C-groups are generalized C-groups, and string C-groups are generalized string C-groups. 

The regular $n$-complex $\K=\K(\Gamma)$ of Theorem~\ref{pofa} associated with a string C-group $\Gamma=\langle\rho_{0},\ldots,\rho_{n-1}\rangle$ is a polytope, by Theorem~\ref{cgrpolprop}(c). Thus $\K$ is a regular $n$-polytope, with partial order given by (\ref{incdef}), or equivalently, (\ref{coxcompinc}). The relevant subgroups involved in describing the partial order are $\Gamma_{i} = \langle \rho_j \mid j\neq i\rangle$, $\Gamma_{i}^{-} = \langle \rho_j \mid j\leq i\rangle$, and $\Gamma_{i}^{+} = \langle \rho_j \mid j\geq i\rangle$. The $i$-faces of $\K$ are the right cosets of $\Gamma_{i}$ for each $i$. 

Thus the string C-groups are precisely the groups of regular polytopes.

Abstract polytopes of rank $3$ are also called ({\it abstract\/}) {\it polyhedra\/}. Regular polyhedra are regular maps on surfaces, and most regular maps on surfaces are regular polyhedra (see \cite{con,cm}).

A regular $n$-polytope $\K$ is of ({\it Schl\"afli\/}) {\it type\/} $\{p_{1},\ldots,p_{n-1}\}$ if its sections $G/F$ of rank $2$ defined by an $(i-2)$-face $F$ and an $(i+1)$-face $G$ with $F<G$ are isomorphic to $p_{i}$-gons (possibly $p_{i}=\infty$) for $i=1,\ldots,n-1$; then $p_{i}$ is the order of $\rho_{i-1}\rho_i$ in $\Gamma(\K)$. 

Coxeter groups are a particularly important class of C-groups (see \cite[Ch. 3]{arp} and \cite{hu,esdiss,kom2}). Let $p_{1},\ldots,p_{n-1}\geq 2$, and let $\Gamma=\langle\rho_{0},\ldots,\rho_{n-1}\rangle$ be the (string) Coxeter group defined by the relations
\begin{equation}
\label{coxrels}
\begin{array}{lcl}
\rho_i^2                      &   = 1  &  \mbox{for  } 0 \leq i \leq n-1 ; \\
{(\rho_i\rho_j)}^2         &  =1  &  \mbox{for  } 0 \leq i < j-1\leq n-2 ; \\
{(\rho_{i-1}\rho_i)}^{p_i} & = 1  &  \mbox{for  } 1 \leq i \leq n-1.
\end{array}
\end{equation}
Then $\Gamma$ is a string C-group. The corresponding regular $n$-polytope is called the {\it universal regular polytope of type $\{p_{1},\ldots,p_{n-1}\}$\/} and is denoted by the Schl\"afli symbol $\{p_{1},\ldots,p_{n-1}\}$. This polytope covers every regular polytope of type $\{p_{1},\ldots,p_{n-1}\}$. For combinatorial and geometric constructions of the universal regular polytopes from the Coxeter complexes of the underlying Coxeter groups $\Gamma$ see \cite[Sect. 3D]{arp} (or \cite{esdiss,kom2}).

The regular convex polytopes and regular tessellations (or honeycombs) of spherical, Euclidean or hyperbolic spaces are particular instances of universal regular polytopes, with the type determined by the standard 
Schl\"afli symbol.   

\section{Extensions of regular complexes}
\label{ext}

A central problem in the classical theory of regular polytopes is the construction of polytopes with prescribed facets. In this section, we briefly investigate the corresponding problem for regular incidence complexes. We say that a regular complex $\mathcal{L}$ is an {\it extension\/} of a regular complex $\K$ if the facets of $\mathcal{L}$ are isomorphic to $\K$ and if all automorphisms of $\K$ are extended to automorphisms of $\CL$. In conjunction with the former condition the latter condition means that the stabilizer of a facet of $\CL$ in $\Gamma(\CL)$ contains $\Gamma(\K)$ as a subgroup; or, less formally, $\Gamma(\K)\leq\Gamma(\CL)$.

Let $\K$ be a regular $n$-complex with $n\geq 1$, let $\Phi=\{F_{-1},F_{0},\ldots,F_{n}\}$ be a base flag of~$\K$, let $\Gamma$ be a flag-transitive subgroup of $\Gamma(\K)$, and let $R_{-1},R_{0},\ldots,R_{n}$ be the distinguished generating subgroups of $\Gamma$ associated with $\Phi$. In constructing extensions of $\K$ we consider certain groups $\Lambda$ with distinguished systems of generating subgroups $R'_{-1},R'_0,\ldots,R'_{n+1}$. We use similar notation for the distinguished subgroups of $\Lambda$ as for $\Gamma$. The following theorem was proved in \cite{esdiss,kom3} (see also \cite{onarr}).

\begin{theorem}
\label{extthm}
Let $\K$ be a regular $n$-complex with $n\geq 1$, and let $\Gamma = \langle R_{-1},R_{0},\ldots,R_{n}\rangle$ be a flag-transitive subgroup of $\Gamma(\K)$, as above. Let $\Lambda$ be a group generated by a system of subgroups $R'_{-1},R'_0,\ldots,R'_{n+1}$ satisfying the following conditions (a), (b) and (c).\\[.04in]
(a)\; $R'_{-1}=R'_{n+1}\subset R'_n$, $\Lambda\neq\Lambda_{n-1}^{-}$;\\[.02in]
(b)\; $R'_{i}R'_{j}=R'_{j}R'_{i}$ for $0\leq i<j-1\leq n-1$; \\[.02in]
(c)\; There exists a surjective homomorphism $\pi: \Lambda_{n-1}^{-}\rightarrow \Gamma$ such that \\[.02in]
\indent \; (c1)\, $\pi^{-1}(R_{i})=R'_i$ for $i=-1,0,\ldots,n-1$;        \\[.02in]
\indent \; (c2)\, $\Lambda_{i}^{+}\cap \Lambda_{n-1}^{-} =\pi^{-1}(\Gamma_{i}^{+})$ for $i=-1,0,\ldots,n$. \\[.04in]
Then there exists a regular $(n+1)$-complex $\CL$ with facets isomorphic to $\K$. In particular, $\Lambda$~acts  flag-transitively on $\CL$, and $\CL$ is finite if $\Lambda$ is finite. If $\pi$ is an isomorphism, then $\Lambda$ is isomorphic to a flag-transitive subgroup of $\Gamma(\CL)$, the group $\Gamma$ is a subgroup of $\Lambda$, and $\CL$ is finite if and only if $\Lambda$ is finite. Further,~$\CL$ is a lattice, if $\K$ is a lattice and $\Lambda$ satisfies the following condition:\\[.04in]
(d)\; Let $0\leq i\leq j<k\leq n$ and $\tau\in \Lambda_{k-1}^{-}$. If 
$\tau\not\in \Lambda_{i+1}^{+}\Lambda_{j-1}^{-}$ and if 
$\tau\not\in \Lambda_{i+1}^{+}\Lambda_{l-1}^{-}\Lambda_{\{j+1,\ldots,k-1\}}$ for each $l$ with $j<l<k$, then
$\Lambda_{j+1}^{+}\cap \Lambda_{n-1}^{-}\Lambda_{i+1}^{+}\tau \subseteq 
\Lambda_{n-1}^{-}\Lambda_{k+1}^{+}$.
\end{theorem}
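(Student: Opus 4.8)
The plan is to recognize $\Lambda$, together with $R'_{-1},R'_{0},\ldots,R'_{n+1}$, as a generalized string C-group of rank $n+1$, and then let the machinery of Section~\ref{comfgroups} produce everything: Theorem~\ref{pofa} manufactures the regular $(n+1)$-complex $\CL:=\K(\Lambda)$ with its flag-transitive $\Lambda$-action and the finiteness clause, Theorem~\ref{cgrpolprop}(b) exhibits its facets, and Theorem~\ref{pap} identifies those facets with $\K$. Before anything else I would record two preliminaries. Applying $\pi^{-1}$ to the inclusions $R_{-1}\subseteq R_{i}$ and using (c1) gives $R'_{-1}\subseteq R'_{i}$ for $0\leq i\leq n-1$; together with (a) this shows that $R'_{-1}=R'_{n+1}$ lies in every $R'_{i}$ and so serves as the common base-flag stabilizer. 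Moreover $\ker\pi\subseteq\pi^{-1}(R_{-1})=R'_{-1}$, and since $\ker\pi\trianglelefteq\Lambda_{n-1}^{-}$ it lies in $\mathrm{core}(R'_{-1})$ inside $\Lambda_{n-1}^{-}$ and hence acts trivially on $\K(\Lambda_{n-1}^{-})$.

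The commuting relations (\ref{scgr}) for $\Lambda$ are just (b) once the trivial cases involving $R'_{-1}=R'_{n+1}$ are appended, so the heart of the matter is the intersection property (\ref{cgr}), namely $\Lambda_{I}\cap\Lambda_{J}=\Lambda_{I\cap J}$, and I expect this to be the main obstacle. I would prove it by induction on $n$, the base case $n=1$ being direct. The facet group $\Lambda_{n-1}^{-}=\langle R'_{0},\ldots,R'_{n-1}\rangle$ inherits the property from $\Gamma$: surjectivity of $\pi$ with (c1) gives $\pi(\Lambda_{I})=\Gamma_{I}$, the intersection property (\ref{cgr}) for the generalized string C-group $\Gamma$ gives $\Gamma_{I}\cap\Gamma_{J}=\Gamma_{I\cap J}$, and $\ker\pi\subseteq R'_{-1}\subseteq\Lambda_{I\cap J}$ lets me lift equality back to $\Lambda_{n-1}^{-}$. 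The vertex-figure group $\Lambda_{1}^{+}=\langle R'_{1},\ldots,R'_{n}\rangle$ inherits the property from the induction hypothesis: restricting $\pi$ to $\langle R'_{1},\ldots,R'_{n-1}\rangle$ shows that $\Lambda_{1}^{+}$, with generating subgroups $R'_{1},\ldots,R'_{n}$, is itself an extension datum of exactly the kind in the theorem, one rank lower, for the vertex-figure $F_{n}/F_{0}$ of $\K$ (on which $\Gamma_{1}^{+}$ acts flag-transitively), the analogues of (a), (b) being immediate and the analogue of (c2) being read off from (c2) for $\Lambda$ by intersecting with $\Lambda_{1}^{+}$. The remaining input is the ``diagonal'' identity $\Lambda_{n-1}^{-}\cap\Lambda_{1}^{+}=\Lambda_{\{1,\ldots,n-1\}}$, which is exactly (c2) with $i=1$, since $\Gamma_{1}^{+}=\Gamma_{\{1,\ldots,n-1\}}$ and its $\pi$-preimage is $\langle R'_{1},\ldots,R'_{n-1}\rangle$. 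With these three facts the full property follows by a short reduction: if some index of $\{0,\ldots,n\}$ is missing from $I\cup J$, then $\Lambda_{I}$ and $\Lambda_{J}$ lie in a proper distinguished subgroup which by (\ref{gami}) factors as a commuting product of a subgroup of the facet group and a subgroup of the vertex-figure group and so inherits the property; otherwise $I\cup J=\{0,\ldots,n\}$, and one may assume $n\notin I$ and $0\notin J$, so $\Lambda_{I}\cap\Lambda_{J}\subseteq\Lambda_{n-1}^{-}\cap\Lambda_{1}^{+}=\Lambda_{\{1,\ldots,n-1\}}$ and the intersection properties of the facet and vertex-figure groups close the argument. The delicate point throughout is the bookkeeping around the non-commuting consecutive pair $R'_{n-1},R'_{n}$, where (b) supplies no relation and one must lean on (c2).

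Granting that $\Lambda$ is a generalized string C-group, Theorem~\ref{pofa} yields the regular $(n+1)$-complex $\CL=\K(\Lambda)$, on which $\Lambda$ is flag-transitive and which is finite when $\Lambda$ is. By Theorem~\ref{cgrpolprop}(b) the facets of $\CL$ are isomorphic to $\K(\Lambda_{n-1}^{-})$, the complex of the distinguished subgroup $\langle R'_{0},\ldots,R'_{n-1}\rangle$. Since $\ker\pi$ acts trivially there and $\pi$ induces an isomorphism $\Lambda_{n-1}^{-}/\ker\pi\cong\Gamma$ carrying each distinguished subgroup of $\Lambda_{n-1}^{-}$ to the corresponding one of $\Gamma$, the partial order (\ref{incdef}) is transported verbatim, so $\K(\Lambda_{n-1}^{-})\cong\K(\Gamma)$, which is $\cong\K$ by Theorem~\ref{pap}. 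Thus the facets of $\CL$ are copies of $\K$; and because $\Gamma$, realized as $\Lambda_{n-1}^{-}/\ker\pi$, sits inside the stabilizer of a facet, every automorphism of $\K$ extends, so $\CL$ is an extension of $\K$ in the required sense.

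It remains to treat the refinements. If $\pi$ is an isomorphism then $\ker\pi=1$, so $\Lambda_{n-1}^{-}\cong\Gamma$ genuinely contains $\Gamma$; to embed $\Lambda$ in $\Gamma(\CL)$ I would show that $\mathrm{core}(R'_{-1})$, the kernel (\ref{core}) of the $\Lambda$-action, is trivial. This core is normal in $\Lambda$ and contained in $R'_{-1}\subseteq\Lambda_{n-1}^{-}$, so its $\pi$-image is normal in $\Gamma$ and contained in $R_{-1}$, whence it lies in $\mathrm{core}(R_{-1})$, which is trivial by Corollary~\ref{corgscg}; injectivity of $\pi$ then forces the core itself to be trivial. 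For ``$\CL$ finite iff $\Lambda$ finite'' one direction is Theorem~\ref{pofa}, and conversely if $\CL$ is finite then its facet $\K$ is finite, hence $\Gamma(\K)$ and a fortiori $\Gamma$ is finite, so $R'_{-1}\cong R_{-1}$ is finite and $|\Lambda|=|\CL\text{-flags}|\cdot|R'_{-1}|$ is finite by orbit--stabilizer. Finally, assuming $\K$ is a lattice, I would reduce the lattice property of $\CL$ to the existence of least upper bounds of pairs of faces, expressed through the coset order (\ref{incdef}); condition (d) is engineered to list precisely the configurations of an element $\tau\in\Lambda_{k-1}^{-}$ that would otherwise obstruct a common upper bound of the correct rank, so that under (d) every such pair acquires a join, meets following dually. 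I expect this last verification to be routine but notation-heavy once the intersection property is secured, and I would place it at the end.
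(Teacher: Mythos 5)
The paper does not actually prove Theorem~\ref{extthm}; it only cites \cite{esdiss,kom3} (see also \cite{onarr}), so your proposal can only be judged against the machinery of Sections~\ref{groupfcom}--\ref{comfgroups}. Your overall route is certainly the intended one: verify that $\Lambda$ with $R'_{-1},\ldots,R'_{n+1}$ is a generalized string C-group, invoke Theorem~\ref{pofa} to get $\CL=\K(\Lambda)$, Theorem~\ref{cgrpolprop}(b) to identify the facets with $\K(\Lambda_{n-1}^{-})$, and the observation that $\pi$ transports the coset order to conclude $\K(\Lambda_{n-1}^{-})\cong\K(\Gamma)\cong\K$. The treatment of the facet subgroup (using $\ker\pi\subseteq R'_{-1}$ to pull the intersection property of $\Gamma$ back to $\Lambda_{n-1}^{-}$ via $\Lambda_I=\pi^{-1}(\Gamma_I)$), the identification $\Lambda_{n-1}^{-}\cap\Lambda_{1}^{+}=\Lambda_{\{1,\ldots,n-1\}}$ from (c2), and the core/finiteness arguments are all sound.

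There are, however, two genuine gaps. First, your reduction of the intersection property to the facet group, the vertex-figure group, and the single diagonal intersection is essentially the quotient criterion of \cite[Prop.\ 2E16]{arp}, but the case analysis you give does not establish it. The dichotomy ``some index is missing from $I\cup J$'' versus ``$I\cup J=\{0,\ldots,n\}$, and one may assume $n\notin I$ and $0\notin J$'' is not exhaustive: it omits exactly the configurations with $n\in I\cap J$ (or $0\in I\cap J$) and $I\cup J=\{0,\ldots,n\}$, e.g.\ $I=\{0,n\}$, $J=\{1,\ldots,n\}$. These are precisely the cases where the non-commuting pair $R'_{n-1},R'_{n}$ intervenes and where the real work lies; the standard repair is to first prove $\Lambda_I\cap\Lambda_{n-1}^{-}=\Lambda_{I\setminus\{n\}}$ for \emph{every} $I$ (which is easy when $n-1\notin I$, using $R'_{n}\cap\Lambda_{n-1}^{-}=R'_{-1}$ from (c2) with $i=n$, but requires a separate induction when $\{n-1,n\}\subseteq I$), and only then deduce the general case. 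Relatedly, your induction on the vertex-figure datum does not literally re-enter the theorem's hypotheses, since $\Gamma_{1}^{+}$ need not act faithfully on $F_n/F_0$; you need the statement formulated for an arbitrary generalized string C-group target, not a flag-transitive automorphism subgroup. Second, the lattice assertion under condition (d) is not proved at all: you assert that (d) is ``engineered'' to supply joins and that the verification is routine, but no argument is given connecting the coset inclusions in (d) to the existence of suprema in the order \eqref{incdef}, and the paper itself warns that the lattice property is typically the hardest part of such extension results. As it stands this portion is a restatement of the goal rather than a proof.
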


Note that condition (d) of Theorem~\ref{extthm} can be reformulated as follows: 
if $0\leq i\leq j<k\leq n$, $\tau\in \Lambda_{n-1}^{-}$, and $F_k$ is the supremum of $F_j$ and $F_{i}(\tau\pi)$ in $\K$, then $\Lambda_{j+1}^{+}\cap \Lambda_{n-1}^{-}\Lambda_{i+1}^{+}\tau \subseteq 
\Lambda_{n-1}^{-}\Lambda_{k+1}^{+}$.

Theorem~\ref{extthm} translates the problem of finding an extension of a regular complex $\K$ into an embedding problem for its automorphism group $\Gamma(\K)$ into a suitable group $\Lambda$. A regular complex has many possible extensions. However, it is much harder to find an extension which is a lattice, if $\K$ is a lattice. 

The following result was proved in \cite{esdiss,kom3} (see also \cite{inctor}).

\begin{theorem}
\label{symgext}
Let $\K$ be a finite regular $n$-complex, and let $f$ denote the number of facets of~$\K$. Then $\K$ admits an extension $\CL$ whose automorphism group $\Gamma(\CL)$ contains a flag-transitive subgroup isomorphic to the symmetric group $S_{f+1}$. If $\K$ is a polytope, then $\CL$ is a polytope and $\Gamma({\CL})=S_{f+1}$.
\end{theorem}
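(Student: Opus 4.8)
The plan is to deduce the statement from the extension criterion of Theorem~\ref{extthm}. Fix a flag-transitive subgroup $\Gamma$ of $\Gamma(\K)$, with distinguished generating subgroups $R_{-1},R_0,\ldots,R_n$ associated with the base flag $\Phi$; then the base facet $F_{n-1}$ has stabiliser $\Gamma_{n-1}=\langle R_0,\ldots,R_{n-2}\rangle$ and $f=|\Gamma:\Gamma_{n-1}|$ is the number of facets. I would construct a group $\Lambda\cong S_{f+1}$ carrying a distinguished system $R'_{-1},R'_0,\ldots,R'_{n+1}$ that satisfies conditions (a), (b), (c) of Theorem~\ref{extthm} relative to $\Gamma$ and the $R_i$. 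Feeding this into Theorem~\ref{extthm} then yields a regular $(n+1)$-complex $\CL$ with facets isomorphic to $\K$ on which $S_{f+1}$ acts flag-transitively, which is the content of the theorem.

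To build $\Lambda$, I would use the permutation action of $\Gamma$ on the $f$ facets of $\K$. Label the facets $1,\ldots,f$ with the base facet $F_{n-1}$ being $1$, adjoin a new symbol $0$, and put $S_{f+1}=\mathrm{Sym}\{0,1,\ldots,f\}$. The facet action represents $\Gamma$ as a transitive permutation group on $\{1,\ldots,f\}$ fixing $0$, in which the stabiliser of $1$ is $\Gamma_{n-1}$. I then set $R'_i$ equal to the image of $R_i$ for $i=-1,0,\ldots,n-1$ (so $R'_{-1}=R'_{n+1}$ is the image of the base-flag stabiliser, which fixes $0$), and set $R'_n:=\langle\tau\rangle$ with $\tau:=(0\,1)$ the transposition exchanging the base facet and the new symbol. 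Since conjugating $\tau$ by the facet-transitive group $\Gamma$ produces all star transpositions $(0\,i)$, $1\le i\le f$, and these generate the full symmetric group, one gets $\Lambda:=\langle R'_{-1},R'_0,\ldots,R'_{n+1}\rangle=S_{f+1}$, with $\Lambda_{n-1}^-=\langle R'_0,\ldots,R'_{n-1}\rangle$ equal to the image of $\Gamma$, namely the stabiliser of the symbol $0$ in $\Lambda$.

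Conditions (a) and (b) are then quick to check: $R'_{-1}\subsetneq R'_n$ and $\Lambda\neq\Lambda_{n-1}^-$ because $\tau$ moves $0$ while $\Lambda_{n-1}^-$ fixes it; the relations $R'_iR'_j=R'_jR'_i$ for $0\le i<j-1\le n-1$ with $j\le n-1$ are inherited from the string relations of Lemma~\ref{commu} in $\Gamma$, and $R'_n$ commutes with $R'_0,\ldots,R'_{n-2}$ precisely because the facet stabiliser $\langle R_0,\ldots,R_{n-2}\rangle$ fixes both $1$ and $0$ and hence centralises $\tau$. For (c) one takes $\pi\colon\Lambda_{n-1}^-\to\Gamma$ to be the identification coming from the (faithful) facet action, which gives (c1) $\pi^{-1}(R_i)=R'_i$ immediately. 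I expect the real work to lie in (c2), the requirement $\Lambda_i^+\cap\Lambda_{n-1}^-=\pi^{-1}(\Gamma_i^+)$ for each $i$. Since $\Lambda_{n-1}^-$ is exactly the stabiliser of $0$ in $\Lambda$, this says that inside $\Lambda_i^+=\langle R'_i,\ldots,R'_{n-1},\tau\rangle$ the stabiliser of the symbol $0$ is only $\langle R'_i,\ldots,R'_{n-1}\rangle$; establishing this demands a careful analysis of the orbit of $0$ under $\Lambda_i^+$ together with the intersection property of $\Gamma$, and is the main obstacle.

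Granting (a)--(c), Theorem~\ref{extthm} produces the extension $\CL$ with facets isomorphic to $\K$ and a flag-transitive action of $\Lambda=S_{f+1}$. To see that $S_{f+1}$ itself (and not merely a quotient) appears in $\Gamma(\CL)$, note that the kernel of the action is the core of $R'_{-1}$ in $S_{f+1}$; as the only nontrivial proper normal subgroup of $S_{f+1}$ is $A_{f+1}$ (for $f+1\ge 5$; the small cases are checked directly), this core is trivial unless $R'_{-1}\supseteq A_{f+1}$, which does not occur, so $S_{f+1}$ embeds as a flag-transitive subgroup of $\Gamma(\CL)$. Finally, if $\K$ is a polytope, then $R_{-1}=1$ and $|R_i:R_{-1}|=2$ for all $i$; I would check that $\pi$ is then an isomorphism, so that by Theorem~\ref{cgrpolprop}(c) also $|R'_i:R'_{-1}|=2$ for $i=0,\ldots,n-1$, while $|R'_n:R'_{-1}|=|\langle\tau\rangle|=2$. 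Hence every rank-$1$ section of $\CL$ contains exactly two faces of the intermediate rank, i.e.\ $\CL$ satisfies the diamond condition and is a regular $(n+1)$-polytope. As the automorphism group of a polytope is simply flag-transitive, the flag-transitive subgroup $S_{f+1}$ must coincide with $\Gamma(\CL)$, giving $\Gamma(\CL)=S_{f+1}$.
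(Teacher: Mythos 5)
The paper does not actually prove Theorem~\ref{symgext}; it quotes it from \cite{esdiss,kom3} (see also \cite{inctor}), so there is no internal proof to compare against. Your overall strategy --- take $S_{f+1}$ on the facets of $\K$ plus one new symbol $0$, generated by the facet image of $\Gamma$ together with the star transposition $\tau=(0\,1)$, and feed it into Theorem~\ref{extthm} --- is the right one and matches the construction of the cited papers. But as written the proposal has genuine gaps. First, you silently assume that $\Gamma$ acts faithfully on the facets (``the (faithful) facet action''). This fails for general regular complexes and even for some regular polytopes (e.g.\ the digon, or any polytope with last Schl\"afli entry $2$): an automorphism can fix every facet without being trivial. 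Without faithfulness there is no surjection $\pi\colon\Lambda_{n-1}^{-}\to\Gamma$ from the facet image back onto $\Gamma$, so condition (c) of Theorem~\ref{extthm} cannot be satisfied by your $\Lambda$, and the count $\Gamma(\CL)=S_{f+1}$ in the polytope case also needs $\Gamma$ to embed in $S_f$. This hypothesis (or a non-degeneracy assumption implying it) must be addressed, not assumed. Second, when the flag stabilizer $R_{-1}$ has nontrivial facet image, your choice $R'_n:=\langle\tau\rangle$ violates the requirement $R'_{-1}\subsetneq R'_n$ in condition (a), since the only element of $\langle\tau\rangle$ fixing $0$ is the identity; one must take $R'_n=\langle R'_{-1},\tau\rangle$.

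The more serious problem is (c2), which you correctly identify as the main obstacle but then approach from a false premise. You assert that $\Lambda_{n-1}^{-}$, the image of $\Gamma$, equals the stabilizer of the symbol $0$ in $\Lambda=S_{f+1}$. That stabilizer is the full $S_f$, whereas the image of $\Gamma$ is merely a transitive subgroup of $S_f$ and is almost never all of it (for the cube, $|\Gamma|=48$ against $|S_6|=720$). So (c2) is not the statement that the stabilizer of $0$ in $\Lambda_i^{+}$ is $\langle R'_i,\dots,R'_{n-1}\rangle$; it is the statement that $\langle\Gamma_i^{+},\tau\rangle\cap\Gamma=\Gamma_i^{+}$ inside $S_{f+1}$. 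Since conjugating $\tau$ by $\Gamma_i^{+}$ shows that $\langle\Gamma_i^{+},\tau\rangle$ contains the full symmetric group on $\{0\}\cup O$, where $O$ is the $\Gamma_i^{+}$-orbit of the base facet, this condition reduces to the assertion that any automorphism of $\K$ whose induced facet permutation is supported on $O$ already lies in $\Gamma_i^{+}$ --- a genuinely combinatorial fact about $\K$ that is the heart of the proof and is nowhere established in your proposal. Until that is proved (together with the faithfulness issue above), the appeal to Theorem~\ref{extthm} is not justified; the remaining steps (trivial core of $R'_{-1}$ in $S_{f+1}$, the diamond condition via $|R'_i:R'_{-1}|=2$, and simple flag-transitivity forcing $\Gamma(\CL)=S_{f+1}$) are fine.
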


In the extension $\CL$ of Theorem~\ref{symgext} the $(n-1)$-faces always lie in exactly two facets, regardless of whether or not $\CL$ is a polytope. For lattices $\K$, this complex $\CL$ is almost always again a lattice. A slightly modified construction for the group $\Lambda$, with $S_{f+1}$ replaced by the larger group $S_{f+1}\times \Gamma(\K)$, always guarantees that the corresponding extension of $\K$ is again a lattice if $\K$ is a lattice (see \cite{esext,inctor}). For an extension of a regular complex $\K$ it usually is the lattice property which is the hardest to verify.

For regular polytopes, further extension results have been obtained in recent years (for example, see \cite{pelext}). For these results, both $\K$ and $\CL$ are regular polytopes. There are also good extension results for chiral polytopes (see \cite{cunpel,swc,swfec}) and for hypertopes (see \cite{flw}).

There are also interesting infinite extensions $\CL$ of regular $n$-complexes $\K$, which in the case of polytopes have certain universality properties. Let $k\geq 2$ be an integer, and let $C_k$ denote the cyclic group of order $k$. Let $\Lambda$ be the amalgamated free product of $\Gamma:=\Gamma(\K)=\langle R_{-1},R_{0},\ldots,R_{n}\rangle$ and the direct product $\Gamma_{n-2}^{-}\times C_{k}$, with amalgamation of the subgroups  $\Gamma_{n-2}^{-}$ and $\Gamma_{n-2}^{-}\times \{1\}$ under the isomorphism $\kappa:\,\Gamma_{n-2}^{-}\rightarrow\Gamma_{n-2}^{-}\times \{1\}$ defined by $\varphi\rightarrow (\varphi,1)$. (For amalgamated free products see ~\cite{lynsch}). Then $\Lambda$ is the quotient of the free product of the two groups $\Gamma$ and $\Gamma_{n-2}^{-}\times C_{k}$ obtained by imposing on the free product the set of new relations 
\[ (\varphi)\kappa = \varphi \quad (\varphi\in \Gamma_{n-2}^{-}),\]
which in effect identify $\varphi$ and $(\varphi,1)$ for each $\varphi\in \Gamma_{n-2}^{-}$. Thus, in standard notation for amalgamated free products, 
\[\begin{array}{lclcl} 
\Lambda &\!\!=\!\!&\Gamma \!\!\!\!& * &\!\!\!\!(\Gamma_{n-2}^{-}\times C_{k}). \\[-.05in]
               &  &               & {}^{\Gamma_{n-2}^{-}} &
               \end{array}\]
We use slightly simpler notation and write 
\begin{equation}
\label{amalext}
\Lambda \,=\, \Gamma\, *_{\kappa}(\Gamma_{n-2}^{-}\times C_{k}).
\end{equation}
Then, with the distinguished generating system $R'_{-1},R'_0,\ldots,R'_{n+1}$ given by $R'_{i}:=R_{i}$ for $i\leq n-1$, $R'_{n}:=R_{-1}\times C_k$, and $R'_{n+1}:=R_{-1}$, this group $\Lambda$ turns out to be a generalized C-group. More explicitly, we have the following result (see~\cite{onarr}).

\begin{theorem}
\label{infexten}
Let $\K$ be a regular $n$-complex, and let $k\geq 2$. Then $\K$ admits an infinite extension $\CL$ whose automorphism group $\Gamma(\CL)$ contains a flag-transitive subgroup isomorphic to
$\Gamma\, *_{\kappa}(\Gamma_{n-2}^{-}\times C_{k})$. In $\CL$, each $(n-1)$-face lies in exactly $k$ facets 
(the $k$ facets containing the base $(n-1)$-face of $\CL$ are cyclically permuted by the subgroup $C_k$ of $R'_n$). Moreover, $\CL$ is a lattice if $\K$ is a lattice.  
\end{theorem}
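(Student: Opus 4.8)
The plan is to apply the general extension result of Theorem~\ref{extthm} to the amalgamated free product $\Lambda = \Gamma *_{\kappa}(\Gamma_{n-2}^{-}\times C_{k})$, with the distinguished generating system $R'_{-1},R'_0,\ldots,R'_{n+1}$ prescribed in the excerpt (namely $R'_i := R_i$ for $i\leq n-1$, $R'_n := R_{-1}\times C_k$, and $R'_{n+1} := R_{-1}$). So the first task is to verify that this system satisfies hypotheses (a), (b) and (c) of Theorem~\ref{extthm}, after which the existence of a regular $(n+1)$-complex $\CL$ with facets isomorphic to $\K$ follows immediately, together with the flag-transitive action of $\Lambda$ on $\CL$ and the finiteness clause (which here tells us $\CL$ is infinite, since $\Lambda$ contains the infinite amalgam). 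I would then separately establish the three remaining assertions: that each $(n-1)$-face of $\CL$ lies in exactly $k$ facets, that $C_k$ permutes these cyclically, and that $\CL$ is a lattice whenever $\K$ is.

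First I would check (a) and (b). Condition (a) requires $R'_{-1}=R'_{n+1}\subset R'_n$ and $\Lambda\neq\Lambda_{n-1}^{-}$; the inclusion $R_{-1}\subset R_{-1}\times C_k$ is proper because $k\geq 2$, and $\Lambda\neq\Lambda_{n-1}^{-}$ holds because $\Lambda_{n-1}^{-}=\langle R'_j\mid j\leq n-1\rangle=\langle R_j\mid j\leq n-1\rangle=\Gamma$ sits properly inside the amalgam. For (b) I must verify $R'_iR'_j=R'_jR'_i$ for $0\leq i<j-1\leq n-1$. When both indices are at most $n-1$ this is exactly the commuting property (\ref{scgr}) already known for $\Gamma$. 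The only genuinely new case is $j=n$, where I need $R'_iR'_n=R'_nR'_i$ for $i\leq n-2$; since $R'_n=R_{-1}\times C_k$ and $R_{-1}=\Gamma_\Phi$ centralizes nothing automatically, the commuting must come from the amalgamation: $R_i$ (for $i\leq n-2$) lies in $\Gamma_{n-2}^{-}$, the amalgamated subgroup, and $C_k$ is a free factor commuting with that subgroup in the direct product $\Gamma_{n-2}^{-}\times C_k$, so the images commute in $\Lambda$. This is where the structure of the amalgam is used in an essential way.

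The core of the argument is condition (c): the existence of a surjection $\pi:\Lambda_{n-1}^{-}\to\Gamma$ with $\pi^{-1}(R_i)=R'_i$ for $i=-1,\ldots,n-1$ and $\Lambda_i^{+}\cap\Lambda_{n-1}^{-}=\pi^{-1}(\Gamma_i^{+})$ for all $i$. Here $\Lambda_{n-1}^{-}=\Gamma$, so the natural candidate for $\pi$ is the identity on $\Gamma$, and then (c1) is immediate from $R'_i=R_i$. The delicate part is (c2). For $i\leq n-1$ the groups $\Lambda_i^{+}$ involve the new generator $R'_n=R_{-1}\times C_k$, so I would compute $\Lambda_i^{+}\cap\Gamma$ by appealing to the normal-form theory for amalgamated free products (see the reference to \cite{lynsch} in the excerpt): an element of $\Lambda_i^{+}=\langle R_i,\ldots,R_{n-1},R_{-1}\times C_k\rangle$ lying in the free factor $\Gamma$ must, by the uniqueness of reduced words, already lie in the subgroup of $\Gamma$ generated by $R_i,\ldots,R_{n-1}$ together with the amalgamated part, which is precisely $\Gamma_i^{+}=\pi^{-1}(\Gamma_i^{+})$. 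I expect this normal-form intersection computation to be the main obstacle, since one must carefully track how occurrences of the $C_k$-factor and the amalgamated subgroup $\Gamma_{n-2}^{-}$ interact when a word is required to collapse into the free factor $\Gamma$.

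Granting (c), Theorem~\ref{extthm} yields $\CL$ and the flag-transitive $\Lambda$-action. The count of facets through an $(n-1)$-face is then read off from Theorem~\ref{cgrpolprop}(c) applied to $\CL$: this number is $|R'_n:R'_{-1}|=|R_{-1}\times C_k:R_{-1}|=k$, and the factor $C_k$ of $R'_n$ permutes these $k$ facets; the cyclic action is transitive because $C_k$ acts on its own cosets in $R_{-1}\times C_k$ by regular (hence cyclic) permutation. Finally, for the lattice claim I would verify condition (d) of Theorem~\ref{extthm} under the assumption that $\K$ is a lattice, using the reformulation of (d) in terms of suprema in $\K$ stated just after Theorem~\ref{extthm}; here the one new rank, $n+1$, is governed by the single factor $C_k$, and the supremum condition reduces to the lattice property already present in $\K$ together with the fact that the top sections of $\CL$ are controlled by the cyclic group, so no new failures of the supremum/infimum conditions can arise. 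This completes the plan.
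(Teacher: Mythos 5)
Your overall strategy is sound and is essentially the route the paper intends: the paper itself gives no proof of Theorem~\ref{infexten} (it defers to \cite{onarr}), but its setup makes clear that the content is to verify that $\Lambda=\Gamma*_{\kappa}(\Gamma_{n-2}^{-}\times C_{k})$ with the stated generating system $R'_{-1},\ldots,R'_{n+1}$ is a generalized string C-group, after which either Theorem~\ref{pofa} together with Theorem~\ref{cgrpolprop}, or (as you do) Theorem~\ref{extthm}, produces $\CL$. Your checks of (a) and (b) are correct; in particular you rightly locate the new commutation $R'_iR'_n=R'_nR'_i$ in the fact that $C_k$ centralizes the amalgamated subgroup $\Gamma_{n-2}^{-}$, which contains $R_i$ for $i\leq n-2$, and your count $|R'_n:R'_{-1}|=k$ via Theorem~\ref{cgrpolprop}(c) is the right way to get the ``$k$ facets per $(n-1)$-face'' claim.

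There are, however, two genuine gaps. First, in the crucial step (c2) the one concrete assertion you make is false: the subgroup of $\Gamma$ generated by $R_i,\ldots,R_{n-1}$ ``together with the amalgamated part'' $\Gamma_{n-2}^{-}=\langle R_{-1},R_0,\ldots,R_{n-2}\rangle$ is all of $\Gamma$ (by Lemma~\ref{disting}), not $\Gamma_i^{+}$, so the normal-form argument as you describe it yields the trivial inclusion $\Lambda_i^{+}\cap\Gamma\subseteq\Gamma$ and proves nothing. The correct computation of $\Lambda_i^{+}\cap\Gamma$ must combine two facts: (i) $C_k$ centralizes $\Gamma_{n-2}^{-}$, so in an alternating word in $\Gamma_i^{+}$- and $C_k$-syllables any interior $\Gamma_i^{+}$-syllable lying in the amalgamated subgroup can be pushed past the adjacent $C_k$-syllables and absorbed; and (ii) the intersection property of $\Gamma$ (Lemma~\ref{intprop}), which identifies such a syllable as an element of $\Gamma_i^{+}\cap\Gamma_{n-2}^{-}=\Gamma_{\{i,\ldots,n-2\}}\subseteq\Gamma_i^{+}$, so that the reduced word still has all its $\Gamma$-syllables in $\Gamma_i^{+}$. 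Only after this reduction does the normal form theorem for amalgamated products force every $C_k$-syllable to cancel when the element lies in $\Gamma$, giving $\Lambda_i^{+}\cap\Gamma=\Gamma_i^{+}$. Second, the lattice assertion is not argued at all: condition (d) of Theorem~\ref{extthm} is a nontrivial coset inclusion $\Lambda_{j+1}^{+}\cap\Lambda_{n-1}^{-}\Lambda_{i+1}^{+}\tau\subseteq\Lambda_{n-1}^{-}\Lambda_{k+1}^{+}$ in the amalgam, and the paper explicitly warns that the lattice property is usually the hardest part to verify; saying that ``no new failures of the supremum/infimum conditions can arise'' is a restatement of the goal, not a proof, and the same normal-form bookkeeping in $\Lambda$ is needed here as in (c2).
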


If $\K$ is a regular $n$-polytope and $k=2$, then $\CL$ has the following universality property:\ if $\CP$ is any regular $(n+1)$-polytope with facets isomorphic to $\K$, then $\CP$ is covered by $\CL$. The polytope $\CL$ was called the {\em universal extension\/} of $\K$ in \cite[Ch. 4D]{arp}. For example, if $\K$ is the triangle $\{3\}$, then 
$\CL$ is the regular hyperbolic tessellation $\{3,\infty\}$ by ideal triangles, whose automorphism group is the projective general linear group $\rm{PGL}_{2}(\mathbb{Z})$.

Recently there has been a lot of progress in the study of combinatorial coverings of arbitrary abstract polytopes (see \cite{ha,mosch,mopewi}). For example, in the paper~\cite{mosch}, with Monson, it was shown that every finite abstract polytope is a quotient of a regular polytope of the same rank; that is, every finite abstract polytope has a finite regular cover.

For open questions related to extensions of regular complexes see Problem~\ref{pocom} in the next section.

\section{Abstract Polytope Complexes}
\label{apcs}

An incidence complex $\K$ of rank $n$ is called an {\it $(n-1)$-polytope complex\/}, or simply a {\it polytope complex\/}, if all facets of $\K$ are abstract polytopes. If the rank $n$ is $3$ or $4$ respectively, we also use the term {\it polygon complex\/} or {\it polyhedron complex}.  

Let $\K$ be a regular polytope complex of rank $n$, and let $\Phi:=\{F_{-1},F_{0},\ldots,F_n\}$ be a base flag of $\K$. Let $\Gamma=\langle R_{-1},R_{0},\ldots,R_{n}\rangle$ be a flag-transitive subgroup of $\Gamma(\K)$,  
where as before $R_{-1},R_{0},\ldots,R_{n}$ are the distinguished generating subgroups of $\Gamma$. Recall that  $R_{-1}=R_{n}=\Gamma_\Phi$, which is the stabilizer of $\Phi$ in $\Gamma$. Each subgroup $R_i$ with $i=0,\ldots,n-1$ acts transitively on the $k_{i}=:k_{i}(\K)$ faces of $\K$ of rank $i$ in $F_{i+1}/F_{i-1}$, and by (\ref{kis}) we  know that $|R_{i}:R_{-1}|=k_{i}$. As $\K$ is a polytope complex, $k_{i}=2$ for $i\leq n-2$ and $k:=k_{n-1}\geq 2$. Thus $|R_{i}:R_{-1}|=2$ if $i\leq n-2$, so $R_{-1}$ is a normal subgroup of $R_{i}$ in this case, and $|R_{n-1}:R_{-1}|=k$. It follows that $R_{-1}$ is also a normal subgroup of 
$\Gamma_{n-1}=\langle R_{-1},R_{0},\ldots,R_{n-2}\rangle$, the stabilizer of $F_{n-1}$ in~$\Gamma$. 

Moreover, $\Gamma_{n-1}$ acts flag-transitively on the base facet $F_{n-1}/F_{-1}$ of $\K$, and its subgroup $R_{-1}$ is the stabilizer of the base flag $\{F_{-1},F_{0},\ldots,F_{n-1}\}$ of $F_{n-1}/F_{-1}$ in $\Gamma_{n-1}$. Now $F_{n-1}/F_{-1}$ is a polytope since $\K$ is a polytope complex, so $R_{-1}$ must be the kernel of the action of $\Gamma_{n-1}$ on $F_{n-1}/F_{-1}$; in fact, in a flag-transitive action on a polytope, if a group element stabilizes a flag then it stabilizes every flag and thus also every face. Also, again because the facet is a polytope, the flag-transitive subgroup of the automorphism group $\Gamma(F_{n-1}/F_{-1})$ of the facet $F_{n-1}/F_{-1}$ induced by $\Gamma_{n-1}$ must be $\Gamma(F_{n-1}/F_{-1})$ itself. Thus $\Gamma_{n-1}/R_{-1}$ is a string C-group and 
\begin{equation}
\label{pckernel}
\Gamma_{n-1}/R_{-1} \cong \Gamma(F_{n-1}/F_{-1}).
\end{equation}

The skeletons of abstract polytopes provide interesting examples of polytope complexes. Let $\CL$ be an abstract $m$-polytope, and let $n\leq m$. The {\em $(n-1)$-skeleton\/} of $\CL$, denoted $skel_{n-1}(\CL)$, is the $n$-complex with faces those of $\CL$ of rank at most $n-1$ or of rank $m$ (the $m$-face of $\CL$ becomes the $n$-face of $skel_{n-1}(\CL)$). Then $skel_{n-1}(\CL)$ is an $(n-1)$-polytope complex whose facets are the $(n-1)$-faces of $\CL$. For example, the 2-complex $skel_{1}(\CL)$ can be viewed as a graph often called the {\em edge-graph\/} of $\CL$. 

Now suppose $\CL$ is a regular $m$-polytope and $\Gamma(\CL) = \langle \rho_{0},\ldots,\rho_{m-1}\rangle$, where $\rho_{0},\ldots,\rho_{m-1}$ are the distinguished involutory generators (with respect to a base flag of $\CL$).  Then the $(n-1)$-skeleton $\K:=skel_{n-1}(\CL)$ is a regular polytope complex of rank $n$ admitting a flag-transitive (but not necessarily faithful) action by $\Gamma:=\Gamma(\CL)$. This action of $\Gamma$ on $\K$ is faithful if $\CL$ is a lattice (but weaker assumptions suffice); in fact, in this case $\Gamma(\CL)$ acts faithfully on the vertex set of $\CL$ and hence also on the set of faces of $\CL$ of rank smaller than $n$.  In any case, the distinguished generating subgroups $R_{-1},R_{0},\ldots,R_{n}$ of $\Gamma$ for its action on $\K$ are given by $R_{-1}=R_{n}:=\langle\rho_{n},\ldots,\rho_{m-1}\rangle$, 
\[R_{i}:=\langle\rho_{i},\rho_{n},\ldots,\rho_{m-1}\rangle \;\,(\cong \langle\rho_{i}\rangle\times \langle\rho_{n},\ldots,\rho_{m-1}\rangle) \quad (i\leq n-2),\] and 
$R_{n-1}:=\langle \rho_{n-1},\rho_{n},\ldots,\rho_{m-1}\rangle$. (Here we are in a slightly more general situation than discussed in Section~\ref{groupfcom}, in that $\Gamma$ may act on $\K$ with nontrivial kernel, that is, 
$\Gamma$ may not be a subgroup of $\Gamma(\K)$. However, the corresponding results of Section~\ref{groupfcom} carry over to this situation as well.)  In particular, $R_{n-1}$ must be a string C-group of rank $m-n+1$. Note that the parameter $k=k_{n-1}$ for $\K$ is given by the number of $(n-1)$-faces of $\CL$ that contain a given $(n-2)$-face of $\CL$ (or equivalently, by the number of vertices of the co-face of $\CL$ at an $(n-2)$-face of $\CL$). For example, if $m=n+1$ and $\CL$ is a regular polytope of Schl\"afli type $\{p_{1},\ldots,p_{n}\}$, then $k=p_{n}$ and $R_{n-1}$ is the dihedral group~$D_{p_n}$. 

There are a number of interesting open problems concerning the characterization of regular polytope complexes which are skeletons of regular polytopes of higher rank. 

\begin{problem}
\label{skelprob}
Let $\K$ be a regular polytope complex of rank $n$ with automorphism group $\Gamma(\K)=\langle R_{-1},R_{0},\ldots,R_{n}\rangle$ and base flag $\{F_{-1},F_0,\ldots,F_{n}\}$, and let $k:=k_{n-1}(\K)>2$. Suppose $R_{n-1}$ is isomorphic to a string C-group.\\[.01in] 
(a)\ Is $\K$ always the $(n-1)$-skeleton of a regular polytope? \\[.01in] 
(b)\ Is $\K$ the $(n-1)$-skeleton of a regular $(n+l-1)$-polytope if $R_{n-1}$ is isomorphic to a string C-group of rank $l$?\\[.01in] 
(c)\ Suppose $\K$ is a lattice. Is $\K$ always the $(n-1)$-skeleton of a regular polytope which is also a lattice?\\[.01in] 
(d)\ Let $\CL$ denote a regular polytope with $(n-1)$-skeleton $\K$. What can be said about the structure of $\CL$ in the interesting special cases when $R_{n-1}$ acts on $F_{n}/F_{n-2}$ as a dihedral group $D_k$, alternating group $A_k$, or symmetric group $S_k$? \\[.01in] 
(e)\ Under which conditions on $\K$ is there a unique regular polytope with $\K$ as its $(n-1)$-skeleton? 
\end{problem}

Similar questions can be asked when $\Gamma(\K)$ is replaced by a flag-transitive subgroup $\Gamma$. 

There are also a number of open problems for regular polytope complexes with preassigned type of (polytope) facets.

\begin{problem}
\label{pocom}
Let $\mathcal{F}$ be a regular $(n-1)$-polytope, and let $k>2$.\\
(a)\ Among the regular polytope complexes $\K$ of rank $n$ with facets isomorphic to $\mathcal{F}$ and $k_{n-1}(\K)=k$, when is there a ``universal" polytope complex covering all these polytope complexes?\\
(b)\ Among the regular polytope complexes $\K$ of rank $n$ with facets isomorphic to $\mathcal{F}$, with $k_{n-1}(\K)=k$, and with $R_{n-1}$ acting on $F_{n}/F_{n-2}$ as a cyclic group $C_k$, dihedral group $D_k$, alternating group $A_k$, or symmetric group $S_k$, is there a ``universal" polytope complex covering all these polytope complexes?\\
(c)\ Among the regular polytope complexes $\K$ of rank $n$ with facets isomorphic to $\mathcal{F}$ and $k_{n-1}(\K)=k$, to what extent can one preassign the transitive permutation action of $R_{n-1}$ on $F_{n}/F_{n-2}$? In other words, which permutation groups on $k$ elements arise as $R_{n-1}$?\\
\end{problem}

Note that when Theorem~\ref{infexten} is applied to a regular $(n-1)$-polytope $\mathcal{F}$, interesting examples of regular polytope complexes $\K$ of rank $n$ arise in which $R_{n-1}$ acts on $F_{n}/F_{n-2}$ as a cyclic group $C_k$. The underlying construction of these complexes from group amalgamations has a somewhat ``universal flavor"; however, it is not known if these polytope complexes actually are universal among all polytope complexes with $R_{n-1}$ acting as $C_k$, unless $k=2$. When $k=2$ the corresponding complex is indeed universal and is known as the universal (polytope) extension of $\mathcal{F}$ (see \cite[Ch. 4D]{arp}).

Another interesting problem concerns the existence of simply flag-transitive subgroups. 

\begin{problem}
\label{flsubgroups}
Describe conditions on a regular polytope complex $\K$ that guarantee that $\Gamma(\K)$ contains a simply flag-transitive subgroup.
\end{problem}

The final set of problems we describe here concerns geometric realizations of regular polytope complexes or more general incidence complexes in real Euclidean spaces or unitary complex spaces. It is known that the set of all Euclidean realizations of a given finite abstract regular polytope, if not empty, has the structure of a convex cone called the {\it realization cone\/} of the given polytope (see \cite[Ch. 5]{arp} or \cite{mr,mm}). 

\begin{problem}
\label{real}
Let $\K$ be a finite regular polytope complex of rank $n$ with automorphism group $\Gamma(\K)=\langle R_{-1},R_{0},\ldots,R_{n}\rangle$ and facets isomorphic to $\mathcal{F}$, and let $k:=k_{n-1}(\K)$. Describe the realization space of $\K$ (that is, the set of all realizations of $\K$ in a Euclidean space) in terms of the realization cone of $\mathcal{F}$ and the permutation action of $R_{n-1}$ on the $k$ facets of $\K$ containing the base $(n-2)$-face (that is, the action of $R_{n-1}$ on $F_{n}/F_{n-2}$).
\end{problem}

Our next problem deals with geometric realizations in a Euclidean space of a given dimension $d$. 

\begin{problem}
\label{realinaspace}
Classify all geometrically regular polytope complexes in a Euclidean space of dimension $d\geq 4$.
\end{problem}

The case $d=3$ was solved in \cite{pesch1,pesch2}, where all geometrically regular polygon complexes (of rank 3) in ordinary Euclidean $3$-space were classified. The classification is quite involved.

There may be a nice theory of unitary complex realizations for certain types of incidence complexes (including perhaps the duals of regular polytope complexes). The well-known regular complex $n$-polytopes in unitary complex $n$-space $\mathbb{C}^n$ are examples of incidence complexes which are realized as affine complex subspace configurations on which the (unitary) geometric symmetry group acts flag-transitively (see~\cite{cox1}). These structures could serve as a guide to develop a complex realization theory for more general kinds of incidence complexes. 
 
\section{Notes}
\label{notes}

{\bf (1)}\ 
{\it Regular incidence complexes\/} were introduced around 1977 by Ludwig Danzer as combinatorial generalizations of regular polytopes~\cite{crp}, regular complex polytopes~\cite{cox1}, and other highly ``regular" incidence structures. The notion built on Branko Gr\"unbaum's work on {\it regular polystromata} (see \cite{grgcd}). The first systematic study of incidence complexes from the discrete geometry perspective occurred in my doctoral dissertation~\cite{esdiss} (and the related publications~\cite{kom1,kom2,kom3,onarr}), at about the same time when the concept of diagram geometries was introduced by Buekenhout~\cite{bu1} to find geometric interpretations for the sporadic simple groups. At the time of the writing of my dissertation, I was not aware of Gr\"unbaum's paper~\cite{grgcd}, nor did I know about Buekenhout's work~\cite{bu1}. (These were times before Google!) I learnt about both papers in 1981. Starting with \cite{inctor,amal}, my own work focussed on the class of incidence complexes now called abstract polytopes.

{\bf (2)}\ Incidence complexes satisfying the diamond condition (I4P) were originally called {\it incidence polytopes\/} (see \cite{kom1,kom2}). During the writing of \cite{arp} the new name {\it abstract polytopes\/} was adopted in place of {\it incidence polytopes\/}, and the name ({\it string\/}) {\it C-groups\/} (`C' standing for `Coxeter') was coined for the type of groups that are automorphism groups of abstract regular polytopes. Also, some of the original terminology of \cite{kom1} was changed; for example, the term `rank' was used in place of `dimension' (the term `dimension' was reserved for geometric realizations of abstract regular polytopes~\cite[Ch. 5]{arp} and \cite{mr,mm}).

{\bf (3)}\ Incidence complexes which are lattices were originally called {\it non-degenerate\/} complexes, indicating a  main focus on lattices consistent with ordinary polytope theory. As abstract polytope theory developed, this distinction played less of a role, in part also because the lattice property did not translate into an elegant property for the automorphism group (see, for example, condition (d) in Theorem~\ref{extthm}). 

{\bf (4)}\ Danzer's original definition of an incidence complex used the original connectivity condition (I3). The equivalent condition (I3') for strong flag-connectedness was first introduced in \cite{esdiss}. 

{\bf (5)}\ Our current condition (I4) is weaker than Danzer's original defining condition, which required that there be numbers $k_{0},\ldots,k_{n-1}$ such that, for any $i=0,\ldots,n-1$ and for any $(i-1)$-face $F$ and $(i+1)$-face $G$ with $F<G$, there are exactly $k_{i}$ $i$-faces $H$ with $F<H<G$. For regular (and many other kinds of highly symmetric) incidence complexes the two conditions are equivalent (see equation (\ref{kis})).

{\bf (6)}\ In \cite{kom2}, the faces of the regular $n$-complex $\K(\Gamma)$ of Section~\ref{comfgroups} for a given group $\Gamma$ were denoted by formal symbols of the form $[\varphi,F_{i}]$, where $\varphi\in \Gamma$ and $F_{i}$ is from a fixed $(n+2)$-set (in a way, the base flag). This description of the faces is equivalent to the coset description of faces adopted above, with $[\varphi,F_{i}]$ corresponding to $\Gamma_{i}\varphi$ (see the footnote on p.40 of \cite{kom2}). In \cite{kom2}, there were no explicit analogues of the conditions of Lemma~\ref{incchar}(c) and  (\ref{coxcompinc}) both of which describe the partial order in terms of intersections of cosets (and make these  structures into coset geometries as defined by Tits \cite{asc,buc,bup,pa,tib,ti,rmw}). The analysis in \cite{esdiss,kom2} was carried out in terms of the conditions of Lemma~\ref{incchar}(b) and (\ref{incdef}).
\bigskip

\noindent
{\bf Acknowledgment.} I am grateful to the referees for their careful reading of the original manuscript and their helpful suggestions that have improved this article.

\end{document}